\documentclass[review]{elsarticle}
\usepackage{amsmath,amssymb,amsthm}
\usepackage{multirow}
\usepackage{lineno,hyperref}
\usepackage{bm}
\usepackage{diagbox}
\usepackage{multirow}
\voffset-1.5cm \hoffset -1.22cm
\textheight22cm \textwidth16cm

\usepackage{tabularx}

\def\bR{\mathbb{R}}

\newtheorem{theorem}{Theorem}[section]
\newtheorem{lemma}[theorem]{Lemma}

\newtheorem{remark}{Remark}
\newtheorem{definition}[theorem]{Definition}
\modulolinenumbers[5]
\journal{****}
\bibliographystyle{elsarticle-num}      

\begin{document}

\begin{frontmatter}
\title{\Large Variable metric extrapolation proximal iterative hard thresholding method for $\ell_0$ minimization problem}

\author[a]{Xue Zhang}
\ead{zhangxue2100@sjtu.edu.cn}
\author[b,c]{Xiaoqun Zhang}
\ead{xqzhang@sjtu.edu.cn}

\address[a]{School of mathematics and computer science, Shanxi Normal Univeristy, Shanxi 041001, CHINA}
\address[b]{Department of Mathematics, Shanghai Jiao Tong University, Shanghai 200240, CHINA}
\address[c]{Institute of Natural Sciences, Shanghai Jiao Tong University, Shanghai 200240, CHINA}
\begin{abstract}
In this paper, we consider the $\ell_0$ minimization problem whose objective function is the sum of $\ell_0$-norm and convex differentiable function. A variable metric type method which combines the PIHT method and the skill in quasi-newton method, named variable metric extrapolation proximal iterative hard thresholding (VMEPIHT) method, is proposed. Then we analyze its convergence, linear convergence rate and superlinear convergence rate  under appropriate assumptions. Finally, we conduct numerical experiments on compressive sensing problem and CT image reconstruction problem to confirm VMPIHT method's efficiency, compared with other state-of-art methods.
\end{abstract}
\begin{keyword}
$\ell_0$ regularization;  variable metric;  linear convergence rate; superlinear convergence rate; iterative hard threshholding.
\end{keyword}
\end{frontmatter}

\section{Introduction}
In this paper, we consider the following $\ell_0$ minimization problem  proposed in compressed sensing (CS)
\begin{equation}\label{eqh}
\min_{x} H(x):=f(x)+\lambda \|x\|_0
\end{equation}
where $f(x)$ is convex, $\nabla f(x)$ is $L$-Lipschitz continuous and $\|x\|_0$ is the number
of nonzero components of $x$. Over the last decade, the sparse model with $\ell_0$-norm has been pursued in signal processing and
many other fields, such as image processing\cite{ZhangDong13,2019Wavelet},  machine learning\cite{Machine}, CT image reconstruction\cite{Zengli2018}, computer
vision\cite{2013Sparse}, signal recovery\cite{2020Iterative}, and
electrical capacitance tomography\cite{2017A}.

Despite problem \eqref{eqh} is NP-hard, there exist some methods to approximately solve the problem. To the best of our knowledge, there are three main schemes. The first scheme
is  the forward-backward(FB) method\cite{FB79,1979Ergodic} which originally proposed for solving the the sum of two monotone operators. For the following general problem
\begin{equation}\label{fplusg}
\min_{x} f(x)+g(x)
\end{equation}
whose special case is the problem \eqref{eqh} we consider, \cite{2013Convergence} provides a new convergence results for inexact forward-backward splitting method using Kurdyka-{\L}ojasiewicz inequality under nonconvex assumption.
The proximal iterative hard thresholding(PIHT) method proposed by \cite{Lu12} is to solve the problem \eqref{eqh}, whose recursive formula is
\begin{equation}\label{eqn:PIHT1}
x_{k+1}\in {\arg\min}_{x\in \bR^n}\lambda\|x\|_0+\frac{L}{2}\|x-x_k+\nabla f(x_k)/L\|^2+\frac{\mu}{2}\|x-x_k\|^2.
\end{equation}
For any starting point, $\{x_k\}$ generated by PIHT method converges to a local minimizer under the assumption that $f(x)$ is convex and $\nabla f(x)$ is $L$-Lipschitz continuous. In fact, PIHT method is a direct application of forward-backward splitting method on problem \eqref{eqh}.

The second scheme is extrapolation type methods. The inertial forward-backward (IFB) method \cite{bot2014inertial} for
 solving problem (\eqref{fplusg}) uses Bregman distance to replace Euclidean distance. If using Euclidean distance, the iterative scheme of IFB method is
\begin{eqnarray*}
&&y_{k+1}=x_{k}+2\beta_k(x_k-x_{k-1})\\
&&x_{k+1}\in \arg\min_{x \in X}g(x)+\frac{1}{4\alpha_k}\|x-x_{k}+2\alpha_k\nabla f(x_{k})\|^2+\frac{1}{4\alpha_k}\|x-y_{k+1}\|^2
\end{eqnarray*}
where $\alpha_k,\beta_k>0$ satisfy
\begin{eqnarray}
&&0<\underline{\alpha}\leq\alpha_k\leq \overline{\alpha},0<\beta_k\leq\beta,\;\; 1>\overline{\alpha}L+2\beta\frac{\overline{\alpha}}{\underline{\alpha}} \label{IFB1}
\end{eqnarray}
for some $\overline{\alpha},\underline{\alpha},\beta>0$.
Inequalities \eqref{IFB1} imply that  one  cannot have both of large $\alpha_k$ and $\beta_k$. \cite{2016A} proposed a multi-step IFB method which has two extrapolation step. And the parameters need to satisfy similar conditions. Monotone accelerated proximal gradient (mAPG) method\cite{chubulai} for solving nonconvex problem \eqref{fplusg} has the iterative scheme as follows
\begin{equation}\label{mAPG}
\begin{array}{l}
y_k=x_k+\dfrac{t_{k-1}}{t_k}(z_k-x_k)+\frac{t_{k-1}-1}{t_k}(x_k-x_{k-1}),\;\;t_{k+1}=\dfrac{\sqrt{1+4t_k^2}+1}{2}\\
z_{k+1}=\arg\min_zg(z)+\dfrac{1}{2\alpha_y}\|z-y_k+\alpha_y\nabla f(y_k)\|^2\\
v_{k+1}=\arg\min_vg(v)+\dfrac{1}{2\alpha_x}\|v-x_k+\alpha_x\nabla f(x_k)\|^2\\
x_{k+1}=
\left\{
\begin{array}{cl}
  z_{k+1},& \mbox{ if }f(z_{k+1})+g(z_{k+1})\leq f(v_{k+1})+g(v_{k+1}) \\
  v_{k+1},& \mbox{ otherwise}
\end{array}
\right.
\end{array}
\end{equation}
mAPG has $O(1/k^2)$~convergence rate if $f(x)$ and  $g(x)$ are convex. \cite{chubulai} also proposed a non-monotone APG(nmAPG) for saving the computation cost in each step. nmAPG method gets the next iteration point by $x^{k+1}=z^{k+1}$ when $f(z^{k+1})+g(z^{k+1})\leq c_k-\delta\|z^{k+1}-y^k\|^2$ where
\begin{equation}\label{nmAPG}
q_1=1,c_1=f(x_1)+g(x_1);q_{k+1}=\eta q_k+1, c_{k+1}=\frac{\eta q_kc_k+f(x^{k+1})+g(x^{k+1})}{q_{k+1}};
\end{equation}
otherwise, it gets the next iteration point same with mAPG.
Nonconvex inexact
accelerated proximal gradient (niAPG) method provided by \cite{2018Inexact} requires only
one proximal step (may be inexact) in each iteration, thus less expensive than mAPG. It's iterative scheme  is
\begin{equation}\label{niAPG}
\begin{array}{l}
y_k=x_k+\dfrac{k-1}{k}(x_k-x_{k-1}),\;\;\triangle_k=\max_{t=\max\{1,k-q\},\ldots,k}f(x_t)+g(x_t)\\
v_{k}=
\left\{
\begin{array}{cl}
  y_{k},& \mbox{ if }f(y_{k})+g(y_{k})\leq \triangle_k \\
  x_{k},& \mbox{ otherwise}
\end{array}
\right.\\
x_{k+1}=\arg\min_vg(v)+\dfrac{1}{2\eta}\|v-v_k+\eta\nabla f(v_k)\|^2\\
\end{array}
\end{equation}
\cite{2019A} proposed
a new proximal iterative hard thresholding method (nPIHT) for problem \eqref{eqh} with a convex constraint set $X$ whose iterative scheme  is
\begin{equation}\label{nPIHT}
\begin{array}{l}
y_{k+1}=x_{k}+|\mbox{sign}(x_k)|\omega_k(x_{k}-x_{k-1})\\
\mbox{if} \langle y_{k+1}-x_{k}, \nabla f(y_{k+1})\rangle>0 \mbox{ or } y_{k+1}\notin X \mbox{ reset} y_{k+1}=x_{k}\\
x_{k+1}\in \arg\min_{x \in X}\lambda\| x\|_0+\frac{L}{2}\|x-y_{k+1}+\frac{1}{L}\nabla f(y_{k+1})\|^2+\frac{\mu}{2}\|x-y_{k+1}\|^2.
\end{array}
\end{equation}
The nPIHT method has better numerical results than mAPG for $\ell_0$ minimization problem in \cite{2019A}. Table \eqref{e1} summarizes  some  differences of the above mentioned algorithms.

\begin{table}
\centering
\begin{tabular}{|c|c|c|}
\hline
Method& Assumption &  Convergence \\
\hline
IFB& nonconvex differentiable $f$, nonconvex $g$, KL & globally \\
mAPG& nonconvex differentiable $f$, nonconvex $g$& subsequence\\
niAPG& nonconvex differentiable $f$, nonconvex $g$& subsequence\\
nPIHT& convex differentiable $f$, $g(x)=\lambda\|x\|_0+\delta_X(x)$&globally \\\hline
\end{tabular}
\caption{Comparisons of extrapolation type methods. }\label{e1}
\end{table}

The third scheme is variable metric type methods usually using the following operator
\[
\mbox{prox}_g^A(x):=\arg\min_y g(y)+\frac{1}{2}\|y-x\|_A^2
\]
where $A$ is a symmetric positive definite matrice. Variable metric technique is usually used for accelerating the convergence of the FB method. The general iteration of a variable metric FB method \cite{BONETTINI2016A,2015Splitting,2016On,2016The,2013A,2020Variable,2016Unifying}can be stated as follows
\begin{eqnarray*}
&&y_{k}=\mbox{prox}_g^{\gamma_k^{-1}A_k}(x_{k}-\gamma_kA_k^{-1}\nabla f(x_{k}))\\
&&x_{k+1}=x_k+\lambda_k(y_{k}-x_{k}).
\end{eqnarray*}
Different choices of $A_{k}$ and $\gamma_k$ lead to different convergence properties and practical performances. Usually, a good metric matrix selection rule should be able to preserve the theoretical convergence of the iterates to a solution, spend less effort in the computation of  $\mbox{prox}_g^{\gamma_k^{-1}A_k}$ and have good numerical effect. Under suitable strategies for $A_{k}$, \cite{2009A,2015A,2014New} have numerically shown that variable metric type methods can reach performance comparable with algorithms who have superlinear convergence rate.
Several authors have analyzed the convergence of variable metric FB method when $f(x)$ is differentiable and $g(x)$ is nonconvex \cite{2015Splitting,2013A,2016Unifying}. The convergence can be proved when $\{A_k\}$ is any sequence of symmetric positive definite matrices and satisfies one of the following conditions
\begin{itemize}
\item $\gamma_kA_k\geq aI>LI;$
\item $0\prec mI\preceq A_k\preceq MI$, and $f(x)\leq f(x_k)+\langle x-x_k,\nabla f(x_k)\rangle+\frac{1}{2\gamma_k}\|x-x_k\|_{A_k}$.
\end{itemize}
For some specific issues, \cite{2015Splitting,2013A,2016Unifying} explained how to select metric matrix $\{A_k\}$  in detail.
But in general, the strategy of selecting metric matrix $\{A_k\}$ to ensure the theoretical convergence and improves the effectiveness of the whole method need further study.

For nonconvex and nonsmooth problem \eqref{fplusg}, it is very difficult to propose a convergent method  due to the reduction of convex properties. But $\|x\|_0$ has its own characteristics which is helpful to construct fast convergent method. So we focus on the special case, namely problem \eqref{eqh}. And we aim to design fast method using $\|x\|_0$'s characteristics. In fact, if $k$ is sufficiently large, the non-zero index set of iteration point $x_k$ remains unchanged \cite{Lu12,2019A}. Then  the methods\cite{Lu12,2019A} find the solution of $\min_{x\in C} f(x)$ where $C$ is a linear subspace instead of problem \eqref{eqh}. On the other side, variable metric type method can reach fast performance in practice. Encouraged by these, we proposed a variable metric extrapolation PIHT method which use PIHT method to get a reliable linear subspace $C$, then use variable metric method to minimization $f(x)$ fastly in subspace $C$. Our method is described in Section 2. Its convergence behavior is shown in Section 3. For a general convex differentiable function $f(x)$, any cluster point of iterative sequence is a local minimizer of $H(x)$. In particular, linear convergence rate and superlinear convergence rate are studied for quadratic function $f(x)$ which often happens in signal and image processing. If $f(x)$ is furthermore twice differentiable, sequence convergence is studied. The numerical assessment of the proposed method is described in Section 4, which confirm our method's efficiency compared with other state-of-art methods.

\section{ Algorithm}
\label{sec:alg}
\subsection{Preliminaries}
We first introduce some notations, concepts and results that will be used in this paper. For any $x\in\bR^n$, $x_i$ represents $x$'s $i$-th component and
$\|x\|_0$ denotes the number of $x$'s nonzero elements .
We define the zero element index set of a vector $x\in\mathbb{R}^n$ as $I(x):=\{i:x_i=0\}$. And for any index set $I\subseteq \{1,2,\ldots,n\}$,
\[
C_I:=\{x\in\bR^n: x_i=0\text{ for all }i\in I\}.
\]
 The projection operator defined on a set $C\subseteq \bR^n$ is denoted by
    \[
    P_{C}(x)=\arg\min_{z\in C}\frac{1}{2}\|z-x\|^2.
    \]
    $P_{C}(\cdot)$ is continuous, namely, for any convergent sequence $\{x^k\}$, it holds
 $\lim_{k\rightarrow +\infty}P_{C}(x^k)=P_{C}(\lim_{k\rightarrow +\infty}x^k).$
For any symmetric $A$ and $B$, $A\preceq B$ implies $B-A$ is a symmetric positive semidefinite matrix.

\smallskip
\begin{definition}
 A mapping $T:\bR^n \rightarrow\bR^n$ is said to be $L_T$-Lipschitz
continuous on the set $X\subseteq\bR^n$ if there exists $L_T>0$ such that
\[
\|T(x)-T(y)\| \leq L_T\|x-y\|,\quad \forall x, y\in X.
\]
\end{definition}

\smallskip
\begin{lemma}\cite{BT09}\label{Lips}
For $f: \bR^n\rightarrow \bR $, if $\nabla f(x)$ is L-Lipschitz continuous, the following inequality holds
\[
f(x)-f(y)\leq
\langle\nabla f(y),x-y\rangle+\frac{L}{2}\|x-y\|^2, \;\; \forall x,y \in \bR^n.
\]
\end{lemma}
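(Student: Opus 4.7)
The plan is to reduce the bound to a one-dimensional integral along the segment from $y$ to $x$ and then exploit the Lipschitz hypothesis pointwise. Concretely, I would introduce the auxiliary function $\phi:[0,1]\to\bR$ defined by $\phi(t)=f(y+t(x-y))$, which is differentiable with $\phi'(t)=\langle \nabla f(y+t(x-y)),\,x-y\rangle$ by the chain rule. The fundamental theorem of calculus then gives
\[
f(x)-f(y)=\phi(1)-\phi(0)=\int_0^1\langle \nabla f(y+t(x-y)),\,x-y\rangle\,dt.
\]

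Next I would subtract the "linear" term in a symmetric way. Writing $\langle\nabla f(y),x-y\rangle=\int_0^1\langle\nabla f(y),x-y\rangle\,dt$ and combining, the quantity to be controlled becomes
\[
f(x)-f(y)-\langle\nabla f(y),x-y\rangle=\int_0^1\langle \nabla f(y+t(x-y))-\nabla f(y),\,x-y\rangle\,dt.
\]
Applying Cauchy--Schwarz inside the integrand and then the $L$-Lipschitz property of $\nabla f$ bounds the inner product by $\|\nabla f(y+t(x-y))-\nabla f(y)\|\cdot\|x-y\|\leq L\,t\,\|x-y\|^2$. Integrating $t$ over $[0,1]$ produces the factor $1/2$ and yields the stated inequality.

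This result is the standard descent lemma, so no genuine obstacle arises; the only point requiring mild care is justifying the chain rule and the interchange of integration with the inner product, both of which follow from $f\in C^1$ (implied by having a Lipschitz gradient) and the boundedness of the integrand on the compact interval $[0,1]$. I would state these facts in one line rather than dwell on them, and present the three displayed equations above in sequence as the complete argument.
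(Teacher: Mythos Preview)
Your argument is correct and is exactly the standard proof of the descent lemma: parametrize the segment, apply the fundamental theorem of calculus, subtract the linear term, bound the integrand via Cauchy--Schwarz and the Lipschitz hypothesis, and integrate $Lt\|x-y\|^2$ over $[0,1]$. Nothing is missing.

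As for comparison with the paper: there is nothing to compare. The paper does not prove this lemma at all; it is simply stated with a citation to \cite{BT09} and used as a black box later (e.g.\ in the proof of Lemma~\ref{lem:nochange}). Your write-up therefore supplies strictly more than the paper does. If you want to match the paper's treatment, a one-line reference to the literature would suffice; if you want a self-contained note, your three displayed equations are the right ones to keep.
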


\smallskip
\begin{lemma}\label{Q}
Let $D\in R^{m\times m}$ be a symmetric positive semidefinite matrices. Then $a^TD^2a\geq \lambda_{t} a^TDa$ for any vector $a\in R^m$, where $\rho$ is the smallest eigenvalue except 0.
\end{lemma}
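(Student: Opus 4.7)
The plan is to use the spectral decomposition of $D$ and reduce the matrix inequality to a termwise scalar inequality. First I would write $D = U\Lambda U^T$ with $\Lambda = \mathrm{diag}(\lambda_1,\dots,\lambda_m)$, where all $\lambda_i \ge 0$ since $D$ is symmetric positive semidefinite, and where $\rho$ denotes the smallest nonzero eigenvalue (so $\lambda_i \in \{0\} \cup [\rho,\infty)$ for every $i$). Setting $b := U^T a$, the orthogonality of $U$ yields $a^T D a = \sum_i \lambda_i b_i^2$ and $a^T D^2 a = \sum_i \lambda_i^2 b_i^2$, so the whole lemma becomes a statement about these two weighted sums.

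Next I would compare termwise. If $\lambda_i = 0$, both contributions vanish. If $\lambda_i > 0$, then by definition of $\rho$ we have $\lambda_i \ge \rho$, so multiplying by the nonnegative quantity $\lambda_i b_i^2$ gives $\lambda_i^2 b_i^2 \ge \rho\, \lambda_i b_i^2$. Summing over $i$ yields
\[
a^T D^2 a \;=\; \sum_i \lambda_i^2 b_i^2 \;\ge\; \rho \sum_i \lambda_i b_i^2 \;=\; \rho\, a^T D a,
\]
which is the desired inequality (reading $\lambda_t$ in the statement as $\rho$, the smallest nonzero eigenvalue).

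There is essentially no hard step here — the only thing to be careful about is the mild ambiguity in the statement between the symbols $\lambda_t$ and $\rho$, which should be clarified at the beginning of the proof to fix notation. One should also note that the claim would fail if $\rho$ were interpreted as the smallest eigenvalue including zero (the inequality would become vacuous or trivial in the singular case), so stressing the ``smallest nonzero eigenvalue'' convention is the one genuine subtlety.
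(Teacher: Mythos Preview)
Your proposal is correct and is essentially identical to the paper's proof: the paper also expands $a$ in an orthogonal eigenbasis of $D$, writes $a^T D^2 a$ and $a^T D a$ as weighted sums of eigenvalues, drops the zero-eigenvalue terms, and bounds $\lambda_i^2 \ge \lambda_t \lambda_i$ termwise. Your remark clarifying that $\lambda_t$ and $\rho$ both denote the smallest \emph{nonzero} eigenvalue is a helpful addition.
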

\begin{proof}
Without loss of generality, let $\lambda_1\geq\lambda_2\geq\cdots\geq\lambda_t\geq\cdots\geq\lambda_m$ be $D$'s eigenvalue where $\lambda_{t+1}=\cdots=\lambda_m=0$. $v_1,v_2,\cdots,v_t,\cdots,v_m$ are eigenvectors correspondingly which are orthogonal.
Assume that $a=c_1v_1+\cdots+c_mv_m$. Then
\[
a^TD^2a=\sum_{i=1}^m\lambda_i^2c_i^2\|v_i\|^2=\sum_{i=1}^t\lambda_i^2c_i^2\|v_i\|^2\geq \sum_{i=1}^t\lambda_t\lambda_ic_i^2\|v_i\|^2=\lambda_t a^TDa.
\]
\end{proof}
\subsection{VMEPIHT method}
Emphasizing again,   we focus on the problem \eqref{eqh}, namely
\[
\min_{x} H(x):=f(x)+\lambda \|x\|_0.
\]
Throughout this paper, unless otherwise specified, our  assumptions on problem \eqref{eqh} are\\
\textbf{Assumption A:}
\begin{enumerate}
  \item $f$ is convex differentiable and  bounded from below;
  \item $\nabla f$ is $L$-Lipschitz continuous.
\end{enumerate}
We propose the following \emph{variable metric extrapolation PIHT method}, called VMEPIHT.

\vskip 0.5cm\hrule\vskip 0.1cm \noindent\textbf { VMEPIHT method}\vskip 0.1cm\hrule\vskip 0.2cm

\noindent Choose parameters $\mu>0, \lambda>0$; choose starting point $y^0$;  let $k=0$.

\noindent\textbf {while } the stopping criterion does not hold

Let
\begin{equation}\label{EPIHT}
x_{k}\in \arg\min_{x }\lambda\| x\|_0+\frac{L}{2}\|x-y_{k}+\frac{1}{L}\nabla f(y_{k})\|^2+\frac{\mu}{2}\|x-y_{k}\|^2
\end{equation}

compute positive definite matrix $H_{k}$ by some method, then
\[
y_{k+1}=P_{C_{I(x_{k})}}(x_{k}-\alpha_{k}H_{k}\nabla f(x_{k}))
\]
where step length $\alpha_k$ can guarantee the function value of $f$ to decrease.

$k=k+1$

\noindent\textbf {end(while)} \vskip 0.2cm\hrule\vskip 0.5cm

During the iteration, we use PIHT method  to get a reliable linear subspace $C_{I(x^{k})}$, then minimization $f(x)$ in linear subspace to get fast decrease of function value. That means $\|y_{k+1}\|_0\leq\|x_{k}\|_0$ and $f(y_{k+1})\leq f(x_k)$. Different choices of $H_{k}$ lead to different convergence properties and practical performances. The practical suitable strategies for selecting metric matrix $H_{k}$ and step length $\alpha_k$ will be declared in next section.
\begin{remark}
The solution of the equation \eqref{EPIHT} is given by
 $$x_{k}\in\mathcal{H}_{\sqrt{\frac{2{\lambda}}{L+\mu}}} (y_k-\frac{1}{L+\mu}\nabla f(y_k)),$$
 where $\mathcal{H}_{\sqrt{\frac{2 {\lambda}}{L+\mu}}}(\cdot)$ is the hard thresholding operator defined as
 \begin{equation}\label{eqn:ht}
(\mathcal{H}_{{\gamma}}(c))_i=
\left\{
\begin{array}{ll}
\{c_i\},      &\mbox{if}\; |c_i|>\gamma\\
\{0,c_i\} &\mbox{if}\; |c_i|=\gamma\\
\{0\},        &\mbox{if}\; |c_i|< \gamma.
\end{array}
\right.
\end{equation}
\end{remark}
\section{convergence}
In this section, under suitable assumptions, we analyze  the convergence behavior of VMEPIHT method. For simplicity, denote $C_*=C_{I(x^*)}$ and $C_k=C_{I(x^k)}$. To simplify proof of our main results, we first give the following lemmas.
\begin{lemma}\cite{2015Zhang}\label{HH}
\textbf{ (Continuity of $\mathcal{H_{\gamma}(\cdot)}$)} If $\gamma_k\rightarrow\gamma$, $x_k\rightarrow x$, $y_k\rightarrow y$ and
$x_k\in\mathcal{H}_{\gamma_k}(y_k)$, then $x\in\mathcal{H}_{\gamma}(y)$ holds.
\end{lemma}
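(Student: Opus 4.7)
The plan is to argue componentwise, exploiting the fact that $\mathcal{H}_\gamma$ is separable: for every index $i$, the constraint $x_k \in \mathcal{H}_{\gamma_k}(y_k)$ reduces to $(x_k)_i \in (\mathcal{H}_{\gamma_k}(y_k))_i$, and what needs to be shown is that the limit $x_i$ lies in $(\mathcal{H}_\gamma(y))_i$. Once this is established for each $i$, the conclusion follows. With $\gamma_k\to\gamma$, $(x_k)_i\to x_i$, $(y_k)_i\to y_i$ all given, the proof amounts to a case analysis on the relative position of $|y_i|$ and $\gamma$.

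First I would dispose of the strict cases. If $|y_i|>\gamma$, then by continuity of absolute value and convergence $\gamma_k\to\gamma$, for all sufficiently large $k$ we have $|(y_k)_i|>\gamma_k$; by the definition of $\mathcal{H}_{\gamma_k}$, this forces $(x_k)_i=(y_k)_i$ for those $k$, and passing to the limit gives $x_i=y_i\in(\mathcal{H}_\gamma(y))_i$. Symmetrically, if $|y_i|<\gamma$, then eventually $|(y_k)_i|<\gamma_k$ and $(x_k)_i=0$, so $x_i=0\in(\mathcal{H}_\gamma(y))_i$. These two branches are essentially routine.

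The only subtle branch is the boundary case $|y_i|=\gamma$, where $(\mathcal{H}_\gamma(y))_i=\{0,y_i\}$ is genuinely set-valued, and here the main idea is a dichotomy argument. For each $k$ the inclusion $(x_k)_i\in(\mathcal{H}_{\gamma_k}(y_k))_i$ forces $(x_k)_i\in\{0,(y_k)_i\}$ (regardless of whether $|(y_k)_i|$ is above, below, or equal to $\gamma_k$). If the set of indices $k$ with $(x_k)_i=0$ is infinite, then along that subsequence $x_i=0$; otherwise all but finitely many $k$ satisfy $(x_k)_i=(y_k)_i$, and then $x_i=y_i$. In either case $x_i\in\{0,y_i\}=(\mathcal{H}_\gamma(y))_i$, which completes the boundary case. (The degenerate sub-case $\gamma=0$ forces $y_i=0$ and the claim is trivial.)

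I do not expect a real obstacle here; the only thing to be careful about is precisely the set-valued nature of $\mathcal{H}_\gamma$ at the threshold, which is what makes the statement nontrivial. The dichotomy argument in the boundary case is the one place where one must resist the temptation to say "pass to the limit directly," since the choice between $0$ and $(y_k)_i$ can oscillate with $k$; splitting on whether $\{k:(x_k)_i=0\}$ is finite or infinite cleanly handles this.
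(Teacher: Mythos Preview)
Your argument is correct. The paper does not supply its own proof of this lemma: it is stated with a citation to \cite{2015Zhang} and no argument is given in the text. Your componentwise case analysis is the natural route to the closed-graph property of $\mathcal{H}_\gamma$; the strict cases $|y_i|>\gamma$ and $|y_i|<\gamma$ are handled by eventual stability of the thresholding decision, and the boundary case $|y_i|=\gamma$ is correctly resolved by your dichotomy on whether $\{k:(x_k)_i=0\}$ is infinite, using that $(x_k)_i\in\{0,(y_k)_i\}$ in every regime. There is nothing to compare against in the present paper, and no gap in what you wrote.
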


\begin{lemma}\label{lem:nochange}
Let $\{x_k\}$ be the sequence generated by VMEPIHT method. Then
\begin{enumerate}
\item $\{H(x_k)\}$ is non-increasing;
  \item $\sum_{k=1}^\infty\|x_k-y_{k}\|^2<\infty$, $\|x_k-y_{k}\|^2\rightarrow0$;
  \item $I(x_k)$ changes only finitely often.
\end{enumerate}
\end{lemma}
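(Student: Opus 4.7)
}

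\textbf{Part (1), monotone decrease.} The plan is to chain two one-step inequalities: one for $x_k \mapsto y_{k+1}$ (an $f$-descent step that also cannot grow the sparsity) and one for $y_{k+1} \mapsto x_{k+1}$ (the PIHT subproblem). For the second step I will plug the test point $x = y_{k+1}$ into the optimality of the subproblem \eqref{EPIHT}, expand the squared norms, and combine the resulting inner-product term with Lemma \ref{Lips} applied at $y_{k+1}$; this should yield
\[
H(x_{k+1}) + \tfrac{\mu}{2}\|x_{k+1}-y_{k+1}\|^{2} \;\le\; H(y_{k+1}).
\]
For the first step, since $y_{k+1}=P_{C_{I(x_k)}}(\cdot)$ enforces $(y_{k+1})_i=0$ for every $i\in I(x_k)$, one has $\|y_{k+1}\|_0\le\|x_k\|_0$; together with the line-search condition $f(y_{k+1})\le f(x_k)$ this gives $H(y_{k+1})\le H(x_k)$. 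Chaining the two yields
\[
H(x_{k+1}) + \tfrac{\mu}{2}\|x_{k+1}-y_{k+1}\|^{2} \;\le\; H(x_k),
\]
which already implies non-increase.

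\textbf{Part (2), summability.} I will simply telescope the inequality just proved from $k=1$ to $K$, obtaining $\tfrac{\mu}{2}\sum_{k=1}^{K}\|x_k-y_k\|^{2}\le H(x_1)-H(x_{K+1})$. Since $f$ is bounded from below by Assumption A and $\|x\|_0\ge 0$, $H$ is bounded below, so letting $K\to\infty$ gives $\sum\|x_k-y_k\|^{2}<\infty$ and in particular $\|x_k-y_k\|^{2}\to 0$.

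\textbf{Part (3), stabilisation of $I(x_k)$.} This is the step I expect to be the only subtle one. The plan is to combine three ingredients: (a) the remark showing $x_k\in\mathcal{H}_{\gamma}(y_k-\frac{1}{L+\mu}\nabla f(y_k))$ with $\gamma=\sqrt{2\lambda/(L+\mu)}$, which forces every nonzero component of $x_{k+1}$ to satisfy $|(x_{k+1})_i|\ge\gamma$; (b) the projection identity $(y_{k+1})_i=0$ for all $i\in I(x_k)$; (c) the limit $\|x_{k+1}-y_{k+1}\|\to 0$ from Part (2). From (a) and (c), for all sufficiently large $k$ and every $i\notin I(x_{k+1})$ one gets
\[
|(y_{k+1})_i| \;\ge\; |(x_{k+1})_i| - \|x_{k+1}-y_{k+1}\| \;\ge\; \gamma - \tfrac{\gamma}{2} \;=\; \tfrac{\gamma}{2} > 0.
\]
Combined with (b), this forbids any $i\in I(x_k)$ from lying outside $I(x_{k+1})$, so $I(x_k)\subseteq I(x_{k+1})$ for all large $k$. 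Since $\{I(x_k)\}$ is a monotone chain of subsets of the finite set $\{1,\ldots,n\}$, it must stabilise, giving the claim.

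The main obstacle is really just part (3): one has to be careful that the hard-thresholding relation is used with the correct threshold $\gamma=\sqrt{2\lambda/(L+\mu)}$ (not $\sqrt{2\lambda/L}$) and that the ambiguous-boundary case $|c_i|=\gamma$ in \eqref{eqn:ht} does not spoil the lower bound $|(x_{k+1})_i|\ge\gamma$; both are fine because the selection rule in \eqref{eqn:ht} at equality still assigns either $0$ or $c_i$, and in the latter case the magnitude is exactly $\gamma$.
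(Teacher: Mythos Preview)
Your proof is correct and follows essentially the same route as the paper's own argument: the same descent inequality $H(x_{k})+\tfrac{\mu}{2}\|x_{k}-y_{k}\|^{2}\le H(x_{k-1})$ obtained by combining the subproblem optimality with Lemma~\ref{Lips} and the two facts $\|y_{k}\|_{0}\le\|x_{k-1}\|_{0}$, $f(y_{k})\le f(x_{k-1})$; the same telescoping for Part~(2); and for Part~(3) the same mechanism of combining the hard-threshold lower bound $|(x_k)_i|\ge\sqrt{2\lambda/(L+\mu)}$ with the inclusion $I(y_k)^{C}\subseteq I(x_{k-1})^{C}$ to force eventual monotonicity of the supports. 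The only cosmetic difference is that in Part~(3) the paper observes directly that a support index of $x_k$ which is a zero index of $y_k$ already forces $\|x_k-y_k\|\ge\gamma$, whereas you use a triangle-inequality argument with the threshold $\gamma/2$; both yield the same conclusion.
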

\begin{proof}
1. Since $\nabla f(x)$ is $L$-Lipschitz continuous, from Lemma \eqref{Lips}, we have
\begin{equation}\label{Lip}
f(x_{k})-f(y_{k})\leq
\langle\nabla f(y_{k}),x_{k}-y_{k}\rangle+\frac{L}{2}\|x_{k}-y_{k}\|^2.
\end{equation}
Meanwhile, it follows from VMEPIHT method that
\[
\lambda\|x_{k}\|_0+
\frac{L}{2}\|x_{k}-y_{k}+\frac{\nabla f(y_{k})}{L}\|_2+\frac{\mu}{2}\|x_{k}-y_{k}\|^2
\leq \lambda\|y_{k}\|_0+\frac{L}{2}\|\frac{\nabla f(y_{k})}{L}\|^2
\]
\[
\|y_{k}\|_0\leq\|x_{k-1}\|_0,\;\; f(y_{k})\leq f(x_{k-1})
\]
Summing up the above four inequalities yields
\begin{equation}\label{e3}
H(x_{k})+\frac{\mu}{2}\|x_{k}-y_{k}\|_2\leq H(x_{k-1}).
\end{equation}
It's obvious that $\{H(x_k)\}$ is non-increasing.

2. Summing up the inequality \eqref{e3} over $k=1,\ldots,n$, we have
\[
H(x^n)+\frac{\mu}{2}\sum_{k=1}^n\|x^k-y^{k}\|^2\leq H(x^0).
\]
So $\{\sum_{k=1}^n\|x^k-y^{k}\|^2\}$ has upper bound since $H$ has lower bound. Then we can conclude
\[
\sum_{k=1}^\infty\|x^k-y^{k}\|^2<\infty,\;\; \|x^k-y^{k}\|\rightarrow 0.
\]

3. From the subproblem (\eqref{EPIHT}) and the hard thresholding operator \eqref{eqn:ht}, we have $|(x_k)_i|\geq \sqrt{2\lambda/(L+\mu)}$ for any $i\not\in I(x_k)$. Hence, we have $\|x_k-y_{k}\|\geq \sqrt{2\lambda/(L+\mu)}>0$ if $I(x_k)^C\nsubseteq I(y_{k})^C$ where $I(x_k)^C$ denotes supplementary set of $I(x_k)$ in $\{1,2,\cdots,n\}$. Combining $\|x_k-y_{k}\|\rightarrow 0$ and $I(y_k)^C\subseteq I(x_{k-1})^C$, it is easy to see that $I(x_k)^C\subseteq I(x_{k-1})^C$ always hold if $k$ is sufficiently large. Then
$I(x_k)$ must change only finitely often.
\end{proof}

\begin{theorem}\label{thm:kexists}
Let $H(x)$ be the objective function, and $\{x_k\}$ be the sequence generated by VMEPIHT method, then
\begin{enumerate}
  \item any cluster point of $\{x_k\}$ is a local minimizer of $H(x)$;
      \item $H(x_k)\rightarrow H(x^*)$ where $x^*$ is a cluster point of $\{x_k\}$.
\end{enumerate}
\end{theorem}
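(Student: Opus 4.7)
The plan is to exploit Lemma \ref{lem:nochange} essentially in full: the monotonicity of $H(x_k)$, the fact that $\|x_k-y_k\|\to 0$, and the eventual stabilization of $I(x_k)$. Let $x^*$ be a cluster point along a subsequence $\{x_{k_j}\}$; because $\|x_{k_j}-y_{k_j}\|\to 0$, we also have $y_{k_j}\to x^*$.

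For part 1, I would use the remark characterizing $x_k\in\mathcal{H}_\gamma(y_k-\nabla f(y_k)/(L+\mu))$ with $\gamma=\sqrt{2\lambda/(L+\mu)}$. Applying Lemma \ref{HH} to the subsequence yields the limiting fixed-point relation
$$x^*\in\mathcal{H}_\gamma\bigl(x^*-\nabla f(x^*)/(L+\mu)\bigr).$$
Reading off the hard-thresholding rule \eqref{eqn:ht}, this gives two optimality conditions: (a) for $i\notin I(x^*)$, $\nabla f(x^*)_i=0$ and $|x^*_i|\ge\gamma$; (b) for $i\in I(x^*)$, $|\nabla f(x^*)_i|\le\sqrt{2\lambda(L+\mu)}$.

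Local minimality then follows by a short case analysis on any $z$ sufficiently close to $x^*$. Condition (a) with $|x^*_i|\ge\gamma>0$ ensures $\mathrm{supp}(z)\supseteq\mathrm{supp}(x^*)$ whenever $\|z-x^*\|<\gamma$, so $\|z\|_0=\|x^*\|_0+|J|$ where $J:=\mathrm{supp}(z)\cap I(x^*)$. Convexity of $f$, together with $\nabla f(x^*)_i=0$ on $\mathrm{supp}(x^*)$ and $z_i=0$ for $i\in I(x^*)\setminus J$, gives
$$H(z)-H(x^*)\ \ge\ \langle\nabla f(x^*),z-x^*\rangle+\lambda|J|\ =\ \sum_{i\in J}\nabla f(x^*)_i\,z_i+\lambda|J|.$$
If $J=\emptyset$ the right-hand side is $\ge 0$; if $|J|\ge 1$ the bounded-gradient estimate from (b) lets us dominate the linear term by $\lambda|J|$ once $\|z-x^*\|$ is small enough, so $H(z)>H(x^*)$. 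Thus $x^*$ is a local minimizer.

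For part 2, $\{H(x_k)\}$ is non-increasing by Lemma \ref{lem:nochange}(1) and bounded below since $f$ is bounded below and $\lambda\|x\|_0\ge 0$, hence it converges. The only subtlety is that $\|\cdot\|_0$ is discontinuous, but Lemma \ref{lem:nochange}(3) freezes $I(x_{k_j})=I^*$ for large $j$, and the hard-thresholding lower bound $|(x_{k_j})_i|\ge\gamma$ for $i\notin I^*$ passes to the limit, giving $\mathrm{supp}(x^*)=\mathrm{supp}(x_{k_j})$ and $\|x_{k_j}\|_0=\|x^*\|_0$ eventually. Combined with continuity of $f$, this yields $H(x_{k_j})\to H(x^*)$, which must equal the already-established limit of the full sequence.

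The main obstacle is the local-minimum step: the $\ell_0$ term is discontinuous and the problem is nonconvex, so one has to extract a uniform lower bound $|x^*_i|\ge\gamma$ on the nonzero coordinates of $x^*$ to rule out support shrinkage, and then balance the linear term against the jump $\lambda|J|$ caused by any support expansion. Once that support-inclusion argument is in place, the rest is a direct consequence of the fixed-point limit and convexity of $f$.
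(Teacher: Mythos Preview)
Your proposal is correct and follows essentially the same route as the paper: pass to the fixed-point relation $x^*\in\mathcal{H}_\gamma(x^*-\nabla f(x^*)/(L+\mu))$ via Lemma~\ref{HH}, read off $(\nabla f(x^*))_i=0$ on the support, and then use convexity of $f$ to lower-bound $H(z)-H(x^*)$ by a sum of componentwise terms of the form $\lambda\|\Delta x_i\|_0+(\nabla f(x^*))_i\Delta x_i$, each of which is nonnegative once the perturbation is small enough. The only cosmetic difference is that you extract and use the explicit bounds $|x^*_i|\ge\gamma$ on the support and $|(\nabla f(x^*))_i|\le\sqrt{2\lambda(L+\mu)}$ on $I(x^*)$, whereas the paper absorbs these directly into its definition of the neighborhood $U$ via the radii $|x^*_i|$ and $\lambda/|(\nabla f(x^*))_i|$; the underlying mechanism is identical.
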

\begin{proof}
1. Assume that $x^*$ is a cluster point of $\{x_k\}$ and the subsequence
$\{x_{k_j}\}$ converging to $x^*$. It follows from  Lemma \eqref{lem:nochange} that $\|x_k-y_{k}\|\rightarrow 0$ and then $y_{k_j}\rightarrow x^*$ right now.
The iterative formula \eqref{EPIHT} of VMEPIHT method is equivalent to
 $$x_{k}\in\mathcal{H}_{\sqrt{\frac{2{\lambda}}{L+\mu}}} (y_k-\frac{1}{L+\mu}\nabla f(y_k)).$$
 Letting  $k$ be equal to $k_j$ and $j$ tends to infty, together with the Lemma \eqref{HH}, we obtain
\begin{eqnarray*}
&x^*=\mathcal{H}_{\sqrt{\frac{2{\lambda}}{L+\mu}}}(x^*-\frac{\nabla f(x^*)}{L+\mu}).
\end{eqnarray*}
In particular $x^*_i=x^*_i-\frac{(\nabla f(x^*))_i}{L+\mu},\;\;i\not\in I(x^*)$, namely  $(\nabla f(x^*))_i=0,\;\;i\not\in I(x^*)$.

Denote
\[
 U:=\{\Delta x\in\bR^n:\;\;\|\Delta x\|_{\infty}<\frac{1}{2}\min_{i\in I(x^*)}\{\frac{\lambda}{|(\nabla f(x^*))_i|},1\};\;\; |\Delta x_i|<|x^*_i|,\forall i\notin I(x^*) \},
 \]
It is clear that $x^*+U$ is a neighborhood of $x^*$. For  any $\Delta x\in U$,  we have \begin{itemize}
           \item
           $\lambda\|x^*_i+
\Delta x_i\|_0=\lambda\|x^*_i\|_0,\;\forall i\notin I(x^*)$ since $|\Delta x_i|<|x^*_i|$;
           \item $\Delta x_i(\nabla f(x^*))_i=0,\;\forall i\notin I(x^*)$ ;
           \item $\lambda\|\Delta x_i\|_0+(\nabla f(x^*))_i\Delta x_i\geq0,\; i\in I(x^*)$ since $\|\Delta x\|_{\infty}<\frac{1}{2}\min_{i\in I(x^*)}\{\frac{\lambda}{|(\nabla f(x^*))_i|},1\}$.
         \end{itemize}
Using the above conclusions, for any $\Delta x \in U $, we can obtain
\begin{eqnarray*}
&H(x^*+\Delta x)-H(x^*)&=\lambda\|x^*+\Delta x\|_0-\lambda\|x^*\|_0+f(x^*+\Delta x)-f(x^*) \\
&& \geq \sum_{i\in I(x^*)}\lambda\|
\Delta x_i\|_0+\langle \nabla f(x^*), \Delta x\rangle\\
&&=\sum_{i\in I(x^*)}(\lambda\|\Delta x_i\|_0+(\nabla f(x^*))_i\Delta x_i)+\sum_{i\not\in I(x^*)}\Delta x_i(\nabla f(x^*))_i\\
&&\geq0
\end{eqnarray*}
So $x^*$ is a local minimizer of objective function $H(x)$.

2. By inequality (\eqref{e3}),
$H(x_{k})$ is non-increasing.  Together with the assumption that $f(x)$ is bounded from below,  we can conclude
$H(x_k)$ is convergent.
Furthermore, $H(x_{k})\rightarrow H(x^*)$ since $I(x_k)=I(x^*)$ when $k\geq k_0$ and $f(x_{k_j})\rightarrow f(x^*)$.
\end{proof}
Note that, by  the proof of Lemma \eqref{lem:nochange} and Theorem \eqref{thm:kexists}, we have the following conclusions which are useful in subsection 3.1 and 3.2.
\begin{itemize}
  \item There exists some $k_0$ such that $I(y_{k})=I(x_k)=I(x^*)$; so we can assume that $x_k=[x^*_k;0],\;\;k\geq k_0$ and $x^*=[x^*_*;0]$ without loss of generality.
  \item When $k\geq k_0$, $\mathcal{H}_{\sqrt{\frac{2{\lambda}}{L+\mu}}} (y_{k}-\dfrac{1}{L+\mu}\nabla f(y_{k}))
=P_{C_*}(y_{k}-\frac{1}{L+\mu}\nabla f(y_{k}))$.
  \item If $x_k=\mathcal{H}_{\sqrt{\frac{2{\lambda}}{L+\mu}}}(x_k-\dfrac{\nabla f(x_k)}{L+\mu})$ for some $k$, $x_k$ ia a local minimizer of $H(x)$. And then VMEPIHT method stops;
  \item $\nabla_* f(x^*)=0$ where $\nabla_* f(x^*)$ denotes the elements of $\nabla f(x^*)$ in linear subspace $C_*$.
\end{itemize}
\subsection{convergence behavior of iterative sequence when $f(x)=\|Ax-b\|^2/2$}
In this subsection, we analyze the convergence behavior of iterative sequence generated by VMEPIHT method when $f(x)=\|Ax-b\|^2/2$.

Firstly,
we explain how to select $H_k$ and step length $\alpha_k$. The strategy we used is very practical. For simplicity, we use the following notations. For a vector $x$, $(x)^k$ denotes the elements of $x$ in linear subspace $C_k$.
Correspondingly, $x_k$ consists of $x^k_k$ and 0 and $y_{k+1}$ also consists of $y^k_{k+1}$ and 0. In linear subspace $C_k$, let $\nabla_k f(x_k)=Q^kx_k^k+c^k$, where $c^k=-(A^Tb)^k$ and the symmetric positive semidefinite matrix $Q^k$ is the elements of $A^TA$ corresponding to $x_k^k$. Clearly, $\nabla_k f(x_k)=(\nabla f(x_k))^k$.

When $f(x)=\|Ax-b\|^2/2$, we obtained $H_k$ and step length $\alpha_k$ through the following ways.
\begin{enumerate}
  \item Symmetric positive definite matrix  $\tilde{H}_k$ is obtained by BFGS or limited-memory BFGS\cite{1999Numerical}. For simplicity, there is no harm in denoting
  $x_k=\left(
               \begin{array}{c}
                 x_k^k \\
                 0 \\
               \end{array}
             \right)$
  and
  \[
 \tilde{ H}_k=\left(
               \begin{array}{cc}
                 H_k^k& * \\
                 * & *\\
               \end{array}
             \right)
  \]
where $H_k^k$ is corresponding to $x^k_k$  and then let
  \[
H_k=\left(
               \begin{array}{cc}
                 H_k^k& 0\\
                 0 & *\\
               \end{array}
             \right)
  \]
  where $*$ represent any positive definite matrix which does not work during the iteration. Then
\[
y_{k+1}=P_{C_k}(x_{k}-\alpha_{k}H_{k}\nabla f(x_{k}))=\left(
               \begin{array}{c}
                 x_k^k+\alpha_k d_k^k\\
                 0 \\
               \end{array}
             \right),\;\;\mbox{where }d_k^k=-H_k^k\nabla_k f(x^{k}).
\]
In practice, we can construct the definite matrix  $\tilde{H}_k$ firstly. Secondly, compute $P_{C_k}(\nabla f(x_k))$ to get
  $\left(
                \begin{array}{c}
                  \nabla_k f(x_{k})\\
                 0 \\
               \end{array}
             \right).$
  Then  naturally
  \[
  y_{k+1}=P_{C_k}(x_k-\alpha_k
\tilde{H}_k \left(
                \begin{array}{c}
                  \nabla_k f(x_{k})\\
                 0 \\
               \end{array}
             \right))=
             \left(
                \begin{array}{c}
                 x^k_k+\alpha_k d_k^k \\
                 0 \\
               \end{array}
             \right).
   \]
  \item If $Q^kd_k^k=0$, let $\alpha_k=0$. Otherwize, it is clear that $d_k^k$ is a descent direction of function $f|_{C_k}$ at point $x_{k}^k$. Let $\alpha_k$ be the optimal steplength, namely
  \[
  \alpha_k=\frac{-\nabla_k f(x_k)^Td_k^k}{d_k^kQ^kd_k^k}=\frac{\nabla_k f(x_k)^TH_k^k\nabla_k f(x_k)}{\nabla_k f(x_k)^TH_k^kQ^kH_k^k\nabla_k f(x_k)}.
  \]
  In practice, the step length
    \[
  \alpha_k=\frac{-\nabla_k f(x_k)^Td_k^k}{d_k^kQ^kd_k^k}=\frac{-\nabla f(x_k)^Td_k}{\|Ad_k\|^2}
  \]
  where $d_k=\left(
               \begin{array}{c}
                 d_k^k \\
                 0 \\
               \end{array}
             \right)$ and hence $\|Ad_k\|^2=((d_k^k)^T, 0)A^TA
    \left(
                \begin{array}{c}
                  d_k^k\\
                 0 \\
               \end{array}
             \right)
  =d_k^kQ^kd_k^k$.
\end{enumerate}

\bigskip
In the following, we will analyze the convergence behavior of $\{x_k\}_{k\geq k_0}$ generated by VMEPIHT method.
Similarly, denote the elements of $H_k$ corresponding to $x_k^*$ by $H_k^*$. Hence $H_k^*$ is a symmetric positive definite matrix. And
\[
f_*(x_k)=f(\left(
               \begin{array}{c}
                 x_k^* \\
                 0 \\
               \end{array}
             \right))=\|A\left(
               \begin{array}{c}
                 x_k^* \\
                 0 \\
               \end{array}
             \right)-b\|^2/2,\;\;
        \nabla_* f(x_k)=(\nabla f(x_k))^*=Qx_k^*+c
        \]
        \[
 d_k^*=-H_k^*\nabla_* f(x^{k}),\;\;       d_k=\left(
               \begin{array}{c}
                 d_k^* \\
                 0 \\
               \end{array}
             \right)
        \]
 where $c=-(A^Tb)^*$ and the symmetric positive semidefinite matrix $Q$ is the elements of $A^TA$ corresponding to $x_k^*$.

If there exists some $k\geq k_0$ such that $\alpha_k=0$, namely $Qd_k^*=0$ and $y_{k+1}=x_k$, combining with the fact $\nabla_* f(x^*)=0$ yields
\[
\langle \nabla_* f(x_k), H_k^*\nabla_* f(x_{k})\rangle=\langle 0-\nabla_* f(x_k), d_k^*\rangle=\langle \nabla_* f(x^*)-\nabla_* f(x_k), d_k^*\rangle=\langle Q(x^*-x_k), d_k^*\rangle=0.
\]
Then $\nabla_* f(x_k)=\nabla_* f(y_{k+1})=0$ right now. Hence
\[
\mathcal{H}_{\sqrt{\frac{2{\lambda}}{L+\mu}}} (y_{k+1}-\frac{1}{L+\mu}\nabla f(y_{k+1}))
=P_{C_*}(x_{k}-\frac{1}{L+\mu}\nabla f(x_{k}))=x_{k}=y_{k+1}.
\]
and so $x_k$ is a local minimizer according to the proof of Theorem \eqref{thm:kexists}. In the following, we assume that VMEPIHT method  gets a infinite set $\{x^k\}$, namely $\alpha_k>0$ for any $ k\geq k_0$.

\begin{theorem}\label{thm:rat}
Let $f(x)=\|Ax-b\|^2/2$ and $\{x_k\}_{k\geq k_0}$ be the sequence generated by VMEPIHT. $x^*$ is any cluster point of $\{x_k\}$ and hence is a local minimizer of $H(x)$.
 Then
\begin{enumerate}
\item $\|x_{k}-x^*\|^2_{A^TA}$ and $\|y_{k}-x^*\|^2_{A^TA}$ tend to 0 decreasingly; if there exists $k\geq k_0$ such that $\|y_{k}-x^*\|^2_{A^TA}=0$, $y_k$ is a local minimizer;
\item if there exist positive constants $m$ and $M$ such that $\prec mI\preceq H_k\prec MI$, then $\|x_{k}-x^*\|^2_{A^TA}$ converges to 0 linearly;
\item if  $Q$ is furthermore a positive definite matrix and $\lim_{k\rightarrow\infty}\dfrac{\|((H_k^*)^{-1}-Q)d_k^*\|}{\|d_k^*\|}=0$,
then $\|x_{k}-x^*\|$ converges to 0 at a superlinear rate.
\end{enumerate}
\end{theorem}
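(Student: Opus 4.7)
The starting observation is that Lemma \ref{lem:nochange} together with the remarks following Theorem \ref{thm:kexists} collapse the dynamics into the fixed subspace $C_*$ for $k\geq k_0$. In that subspace the stationarity $\nabla_* f(x^*)=0$ cancels the linear term of the quadratic, giving
\[
f(x)-f(x^*)=\tfrac{1}{2}\|x-x^*\|_{A^TA}^2\quad\text{for every }x\in C_*.
\]
I would use this identity throughout to translate between $f$-value decrease, which is already controlled by inequality (\ref{e3}), and $A^TA$-seminorm decrease.

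For Part 1, I first note that the exact line-search rule makes $\alpha_k$ the unique minimizer of $\alpha\mapsto f(x_k+\alpha d_k)$, so $f(y_{k+1})\leq f(x_k)$, while the PIHT inequality (\ref{e3}) together with $\|x_{k+1}\|_0=\|y_{k+1}\|_0$ (true because $I(x_{k+1})=I(y_{k+1})=I(x^*)$ for $k\geq k_0$) yields $f(x_{k+1})\leq f(y_{k+1})$. The identity above turns both into monotone decrease of the corresponding $A^TA$-seminorms, and convergence to $0$ follows from $H(x_k)\rightarrow H(x^*)$ granted by Theorem \ref{thm:kexists}. The corner case $\|y_k-x^*\|_{A^TA}^2=0$ forces $A(y_k-x^*)=0$, hence $\nabla f(y_k)=\nabla f(x^*)$; since the coordinates of $\nabla f(x^*)$ outside $I(x^*)=I(y_k)$ vanish, rerunning the neighborhood argument from the proof of Theorem \ref{thm:kexists} with $y_k$ in place of $x^*$ shows that $y_k$ is a local minimizer of $H$.

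For Part 2, set $g_k=\nabla_*f(x_k)=Q(x_k^*-x_*^*)$. Substituting the optimal $\alpha_k$ into the quadratic expansion produces
\[
f(y_{k+1})-f(x_k)=-\frac{(g_k^T H_k^* g_k)^2}{2(H_k^*g_k)^T Q (H_k^*g_k)}.
\]
I would bound the numerator from below by $m^2\|g_k\|^4$ and the denominator from above by $M^2\|Q\|\|g_k\|^2$ using $mI\preceq H_k^*\preceq MI$, and invoke Lemma \ref{Q} applied to $Q$ with $a=x_k^*-x_*^*$ to get $\|g_k\|^2=(x_k^*-x_*^*)^T Q^2(x_k^*-x_*^*)\geq \lambda_t\|x_k^*-x_*^*\|_Q^2$. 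Combining these estimates gives $f(y_{k+1})-f(x^*)\leq (1-\rho)(f(x_k)-f(x^*))$ with $\rho=m^2\lambda_t/(M^2\|Q\|)\in(0,1)$, and the PIHT monotonicity of Part 1 transfers the same rate to $\|x_{k+1}-x^*\|_{A^TA}^2$.

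For Part 3, positive definiteness of $Q$ gives $x_k^*-x_*^*=Q^{-1}g_k$. Since $(H_k^*)^{-1}d_k^*=-g_k$ by construction, the Dennis-Mor\'e condition rewrites as $Qd_k^*+g_k=o(\|d_k^*\|)$, equivalently $d_k^*=-(x_k^*-x_*^*)+o(\|d_k^*\|)$, so $\|d_k^*\|$ and $\|x_k^*-x_*^*\|$ are asymptotically comparable. Plugging $-g_k=Qd_k^*+o(\|d_k^*\|)$ into the exact-line-search formula and using $Q\succ 0$ yields $\alpha_k\rightarrow 1$, whence $y_{k+1}^*-x_*^*=(1-\alpha_k)(x_k^*-x_*^*)+\alpha_k\,o(\|d_k^*\|)$ and $\|y_{k+1}-x^*\|/\|x_k-x^*\|\rightarrow 0$. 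Finally, the PIHT step reduces in $C_*$ to $x_{k+1}^*-x_*^*=(I-Q/(L+\mu))(y_{k+1}^*-x_*^*)$; because $0\prec Q\preceq LI$ forces $\|I-Q/(L+\mu)\|<1$, the superlinear rate is preserved. The main obstacle is this last combination: unlike textbook quasi-Newton analysis the method does not take a unit step and is followed by a hard-thresholding/prox step, so one must simultaneously verify that $\alpha_k\rightarrow 1$ under Dennis-Mor\'e and that the prox step acts as a bounded linear contraction on $C_*$.
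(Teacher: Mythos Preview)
Your proof is correct and follows the same route as the paper: reduce to the fixed subspace $C_*$ for $k\geq k_0$ and analyze the line-search step and the PIHT step separately, using Lemma~\ref{Q} for the linear rate and the Dennis--Mor\'e condition together with $\alpha_k\to1$ for the superlinear rate. The only cosmetic difference is that you package Parts~1--2 through the identity $f(x)-f(x^*)=\tfrac12\|x-x^*\|_{A^TA}^2$ on $C_*$, whereas the paper expands $\|y_{k+1}^*-x_*^*\|_Q^2$ and $\|x_k^*-x_*^*\|_Q^2$ directly from the iteration formulas; the underlying algebra is identical, and your linear-rate constant $m^2\lambda_t/(M^2\|Q\|)$ differs from the paper's $m\lambda_t/(ML)$ only in how the bounds on $H_k$ are inserted.
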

\begin{proof}
For any $k\geq k_0$,  the iterative process of VMEPIHT turns to
\begin{eqnarray*}
&&x_{k}^*=y_{k}^*-\frac{\nabla_* f(y_{k})}{L+\mu}\\
&&y_{k+1}^*=x_{k}^*-\alpha_k H_k^* \nabla_* f(x_{k})\\
&&\alpha_k=\frac{-\nabla_* f(x_k)^Td_k^*}{d_k^*Qd_k^*}=\frac{\nabla_* f(x_k)^TH_k^*\nabla_* f(x_k)}{\nabla_* f(x_k)^TH_k^*QH_k^*\nabla_* f(x_k)}=\frac{(x_k^*-x^*_*)^TQH_k^*Q(x_k^*-x^*_*)}{(x_k^*-x^*_*)^TQH_k^*QH_k^*Q(x_k^*-x^*_*)}.
\end{eqnarray*}
The third equation about $\alpha_k$ holds owing to the fact $\nabla_* f(x^*)=0$.

1. Form the iterative formula above, we have
\begin{equation}\label{pp}
\begin{array}{rl}
&\|y_{k+1}^*-x_*^*\|^2_Q\\
=&\|x_{k}^*-\alpha_k H_k^*\nabla_* f(x_{k})-x_*^*\|^2_Q=\|x_{k}^*-x_*^*-\alpha_k H_k^*(\nabla_* f(x_{k})-\nabla_* f(x^*))\|^2_Q\\
=&\|x_{k}^*-x_*^*\|^2_Q-2\alpha_k\langle x_{k}^*-x_*^*, QH_k^*(\nabla_* f(x_{k})-\nabla_* f(x^*))\rangle+\alpha_k^2\|H_k^*(\nabla_* f(x_{k})-\nabla_* f(x^*))\|^2_Q\\
=&\|x_{k}^*-x_*^*\|^2_Q-\dfrac{\langle x_{k}^*-x_*^*, QH_k^*Q(x_{k}^*-x_*^*)\rangle^2}{(x_k^*-x_*^*)^TQH_k^*QH_k^*Q(x_k^*-x_*^*)}\\
\leq& \|x_{k}^*-x_*^*\|^2_Q\\
\end{array}
\end{equation}

\begin{equation}\label{pp2}
\begin{array}{rl}
\|x_{k}^*-x_*^*\|^2_Q=&\|y_{k}^*-\nabla_* f(y_{k})/(L+\mu)-x_*^*\|^2_Q\\
=&\|y_{k}^*-\nabla_* f(y_{k})/(L+\mu)-x_*^*+\nabla_* f(x^{*})/(L+\mu)\|^2_Q\\
=&\|(I-\frac{Q}{L+\mu})(y_{k}^*-x_*^*)\|^2_Q\\
\leq& \|y_{k}^*-x_*^*\|^2_Q.
\end{array}
\end{equation}
Combining with the fact that $x^*$ is a cluster point of $\{x_k\}$ $x_k-y_k\rightarrow 0$ from Lemma \eqref{lem:nochange} and $\|y_{k}-x^*\|^2_{A^TA}=\|y_{k}^*-x^*_*\|^2_Q$,
we can conclude $\|y_{k}-x^*\|^2_{A^TA}$ and $\|x_{k}-x^*\|^2_{A^TA}$ tend to 0 decreasingly.

If there exists some $k\geq k_0$ such that $\|y^{k}_k-x^*\|^2_{A^TA}=0$, namely $\nabla f(y_k)=\nabla f(x^*)$, then
\[
\mathcal{H}_{\sqrt{\frac{2{\lambda}}{L+\mu}}} (y_k-\frac{\nabla f(y_k)}{L+\mu})=\mathcal{H}_{\sqrt{\frac{2{\lambda}}{L+\mu}}} (y_k-\frac{\nabla f(x^*)}{L+\mu})\ni y_k.
\]
Hence  $y_k$ is a local minimizer according to the proof of Theorem \eqref{thm:kexists} and algorithm stops.

2. If $0\prec mI\prec H_k\prec MI$, similarly with the procedure \eqref{pp} and \eqref{pp2}, we have
\begin{equation}\label{ee3}
\renewcommand\arraystretch{1.3}
\begin{array}{rl}
\|x_{k+1}^*-x_*^*\|^2_Q=&\|(I-\dfrac{Q}{L+\mu})(y_{k+1}^*-x_*^*)\|^2_Q\\
\leq&\|y_{k+1}^*-x_*^*\|^2_Q\\
=&\|x_{k}^*-x_*^*\|^2_Q-\dfrac{\langle x_{k}^*-x_*^*, QH_k^*Q(x_{k}^*-x_*^*)\rangle}{(x_k^*-x_*^*)^TQH_k^*QH_k^*Q(x_k^*-x_*^*)}\langle x_{k}^*-x_*^*, QH_k^*Q(x_{k}^*-x_*^*)\rangle\\
\leq& \|x_{k}^*-x_*^*\|^2_Q-\dfrac{m}{ML}\langle x_{k}^*-x_*^*, QQ(x_{k}^*-x_*^*)\rangle\\
\leq&(1-\dfrac{m\lambda_t}{ML}) \|x_{k}^*-x_*^*\|^2_Q
\end{array}
\end{equation}
where $\lambda_t$ is $Q$'s minimum eigenvalue except 0 and the last inequality holds due to Lemma \eqref{Q}. Naturally $\|x_{k}-x^*\|^2_{A^TA}=\|x_{k}^*-x_*^*\|^2_Q$ converges to 0 linearly.

3. If  $Q$ is a positive definite matrix and $\lim_{k\rightarrow\infty}\dfrac{\|((H_k^*)^{-1}-Q)d_k^*\|}{\|d_k^*\|}=0$, we have
\begin{equation}\label{alpha}
\alpha_k-1=\frac{\langle(H_k^*)^{-1}d_k^*, d_k^*\rangle}{d_k^*Qd_k^*}-1=\frac{\langle d^k, ((H_k^*)^{-1}-Q)d_k^*\rangle}{\langle d_k^*, Q d_k^*\rangle}\rightarrow0.
\end{equation}
Combining $Q(y_{k+1}^*-x_k^*)=\nabla_*f(y_{k+1}^*)-\nabla_*f(x_{k}^*)$ with the iteration procedure $y_{k+1}^*=x_{k}^*-\alpha_k H_k^* \nabla_* f(x_{k})$, we can obtain
\begin{eqnarray*}
\frac{\|\nabla_*f(y_{k+1}^*)\|}{\|y_{k+1}^*-x_k^*\|}&=&
\frac{\|(Q-\alpha_k^{-1}(H_k^*)^{-1})d_{k}^*\|}{\|d_{k}^*\|}\\
&=&\frac{\|\alpha_k^{-1}(Q-(H_k^*)^{-1})d_{k}^*+(1-\alpha_k^{-1})Qd_{k}^*\|}{\|d_{k}^*\|}\\
&\leq&\frac{\|\alpha_k^{-1}(Q-(H_k^*)^{-1})d_{k}^*\|+\|(1-\alpha_k^{-1})Ld_{k}^*\|}{\|d_{k}^*\|}
\end{eqnarray*}
Let $\lambda_{\min}$ is the minimum eigenvalue of $Q$. It follows from \eqref{alpha} and $\lim_{k\rightarrow\infty}\dfrac{\|((H_k^*)^{-1}-Q)d_k^*\|}{\|d_k^*\|}=0$ that
\[
0\leftarrow\frac{\|\nabla_*f(y_{k+1}^*)\|}{\|y_{k+1}^*-x_k^*\|}=\frac{\|\nabla_*f(y_{k+1}^*)-\nabla_*f(x^*)\|}{\|y_{k+1}^*-x_k^*\|}\geq\frac{\lambda_{\min}\|y_{k+1}^*-x^*_*\|}{\|y_{k+1}^*-x_*^*\|+\|x_{k}^*-x_*^*\|},
\]
which, together with the conclusion $\|x_{k+1}^*-x_*^*\|_Q\leq\|y_{k+1}^*-x_*^*\|_Q$,  imply
\[
\lim_{k\rightarrow\infty}\dfrac{\|x_{k+1}^*-x^*_*\|}{\|x_{k}^*-x_*^*\|}=0.
\]
Thus $\|x_{k}-x^*\|$ converges to 0 at a superlinear rate.
\end{proof}
\begin{remark}
If symmetric positive definite matrix  $\tilde{H}_k$ is obtained by limited-memory BFGS by the set of vector pairs $\{S_i,Y_i\}_{i=k-T}^{k-1}$ where $S_{k-1}=x_{k}-y_k$,
$S_{k-2}=y_{k}-x_{k-1}$, $\cdots $ and $Y_i=A^TAS_i+tI,\;\;t>0$.
So it's clear that $\tilde{H}_k$ have positive upper and lower bounds. And the same for $H_k$.
\end{remark}
\begin{remark}
The Theorem \eqref{thm:rat} tells us a superlinear convergence rate can be attained if $(H_k^*)^{-1}$ become accurate approximation to $Q$ along the direction $d_k^*$.
Although we can't prove that it holds. It worth believing that the VMEPIHT method is fast.
\end{remark}

\subsection{convergence behavior of iterative sequence for general convex differentiable $f(x)$}
In this subsection, we will analyze the convergence behavior of iterative sequence for general convex differentiable $f(x)$.
Under attainable conditions, we prove the  iterative sequence's convergence.

We assume that the sequence of symmetric positive definite matrix $\{H_k\}$  and steplength satisfy the following conditions.
\begin{enumerate}
\item $H_k=\left(
               \begin{array}{cc}
                 H_k^*& 0\\
                 0 & *\\
               \end{array}
             \right)$
  when $k\geq k_0$ and there exists $k_1$ such that $(H_{k+1}^*)^{-1}\preceq(1+\epsilon_k)(H_k^*)^{-1},\;\;$ where $\epsilon_k\geq0$ and $\sum_{k=\max\{k_0,k_1\}}^\infty\epsilon_k<+\infty.$
  \item If $\nabla_kf(x_{k}^k)^Td_k^k=0$, let $\alpha_k=0$; otherwise, $\alpha_k\in \{ \frac{2\|\nabla_* f(x_{k})\|^2}{L\|\nabla_* f(x_{k})\|^2_{H_k^*}}\gamma^i ,\;\;i=0,1,2,\cdots\}$ satisfies Dong's step length rule\cite{2010dong} which can guarantee the function value of $f$ to decrease , namely
  \[
  \langle\nabla_kf(\left(
               \begin{array}{c}
                 y_{k+1}^k\\
                 0 \\
               \end{array}
             \right)),d_k^k\rangle\geq\delta\langle\nabla_k f(\left(
               \begin{array}{c}
                 x_{k}^k\\
                 0 \\
               \end{array}
             \right)),d_k^k\rangle
  \]where $\gamma,\delta\in(0,1)$.
\end{enumerate}

\begin{lemma}\label{ss}\cite{1987Introduction}
Let $\{a_k\}$ and $\{\eta_k\}$ be nonnegative sequences of real nmubers such that
\[
a_{k+1}\leq(1+\eta_k)a_k,\;\; \sum_{k=1}^\infty\eta_k<+\infty.
\]
Then $\{a_k\}$ converges.
\end{lemma}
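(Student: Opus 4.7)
The plan is to rescale the sequence $\{a_k\}$ by the partial products $Q_k:=\prod_{i=1}^{k-1}(1+\eta_i)$ (with $Q_1=1$) so as to turn the inequality $a_{k+1}\leq(1+\eta_k)a_k$ into a monotonicity statement. The rescaled sequence $b_k:=a_k/Q_k$ is nonnegative, and a one-line calculation gives
\[
b_{k+1}=\frac{a_{k+1}}{Q_{k+1}}\leq\frac{(1+\eta_k)a_k}{(1+\eta_k)Q_k}=b_k,
\]
so $\{b_k\}$ is nonincreasing and bounded below by $0$, hence converges to some $b_\infty\geq 0$.

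The next step is to show that the normalizing factor $Q_k$ itself converges to a finite positive limit $Q_\infty$. This is a standard consequence of $\sum_{k=1}^{\infty}\eta_k<+\infty$: using the elementary bound $0\leq \log(1+\eta_i)\leq \eta_i$ for $\eta_i\geq 0$, we get
\[
\sum_{i=1}^{\infty}\log(1+\eta_i)\leq \sum_{i=1}^{\infty}\eta_i<+\infty,
\]
so the partial sums $\log Q_k=\sum_{i=1}^{k-1}\log(1+\eta_i)$ are nondecreasing and bounded, hence converge, and consequently $Q_k\to Q_\infty\in[1,+\infty)$.

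Putting the two pieces together, $a_k=b_kQ_k\to b_\infty Q_\infty$, so $\{a_k\}$ converges. The only subtle point I foresee is making sure the rescaling is well defined, i.e.\ $Q_k>0$ for all $k$, which is immediate since each factor $1+\eta_i\geq 1>0$; no division by zero can occur, and the convergence of $Q_k$ to a strictly positive limit makes the inverse rescaling harmless. No other obstacles arise, and no hypothesis beyond nonnegativity and summability of $\{\eta_k\}$ is needed.
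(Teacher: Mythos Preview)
Your proof is correct and complete. Note that the paper itself does not supply a proof of this lemma: it is quoted from \cite{1987Introduction} and used as a black box, so there is no in-paper argument to compare yours against.
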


\smallskip
\begin{theorem}\label{thm:con}
Assume that $f(x)$ satisfies Assumption A and is furthermore twice continuously differentiable.
Let $\{x_k\}_{k\geq k_0}$ be the sequence generated by VMEPIHT. $x^*$ is any cluster point of $\{x_k\}$ and hence is a local minimizer of $H(x)$.
 Then $ \|x_{k}^*-x_*^*\|_{(H_{k-1}^*)^{-1}}\rightarrow0$.
\end{theorem}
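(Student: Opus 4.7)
The plan is to show that the sequence $a_k := \|x_k^* - x_*^*\|^2_{(H_{k-1}^*)^{-1}}$ is quasi-Fej\'er monotone in the sense of Lemma \ref{ss}, deduce convergence of $\{a_k\}$ to some $a_\infty \geq 0$, and then pin down $a_\infty = 0$ via the cluster-point hypothesis. Throughout I work for $k$ large enough that the active-set stabilization from Lemma \ref{lem:nochange} applies (the iterates stay in $C_*$), the bound $(H_{k+1}^*)^{-1}\preceq(1+\epsilon_k)(H_k^*)^{-1}$ holds, and the simplifications $\nabla_* f(x^*) = 0$ and $f|_{C_*}$ convex with $L$-Lipschitz gradient (and by hypothesis twice continuously differentiable) are in force.

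Inside a single outer iteration $x_k \mapsto y_{k+1} \mapsto x_{k+1}$ I derive two descent inequalities in the $(H_k^*)^{-1}$ metric. For the extrapolation step $y_{k+1}^* - x_*^* = (x_k^* - x_*^*) - \alpha_k H_k^*\nabla_* f(x_k)$ I expand
\begin{equation*}
\|y_{k+1}^* - x_*^*\|^2_{(H_k^*)^{-1}} = \|x_k^* - x_*^*\|^2_{(H_k^*)^{-1}} - 2\alpha_k\langle\nabla_* f(x_k),x_k^*-x_*^*\rangle + \alpha_k^2\|\nabla_* f(x_k)\|^2_{H_k^*},
\end{equation*}
turn the cross term into a multiple of $f_*(x_k) - f_*(x^*)$ via convexity (using $\nabla_* f(x^*) = 0$), and absorb the quadratic term using the sufficient-decrease estimate $f_*(x_k) - f_*(y_{k+1}) \gtrsim \alpha_k\|\nabla_* f(x_k)\|^2_{H_k^*}$ obtained from Dong's curvature condition together with Lemma \ref{Lips} and the initial step-length choice $\bar\alpha_k = 2\|\nabla_* f(x_k)\|^2/(L\|\nabla_* f(x_k)\|^2_{H_k^*})$. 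This yields $\|y_{k+1}^* - x_*^*\|^2_{(H_k^*)^{-1}} \leq \|x_k^* - x_*^*\|^2_{(H_k^*)^{-1}} - b_k^{(1)}$ with $b_k^{(1)} \geq 0$. For the PIHT step $x_{k+1}^* - x_*^* = (y_{k+1}^* - x_*^*) - (L+\mu)^{-1}\nabla_* f(y_{k+1})$, the convex co-coercivity $\langle\nabla_* f(y_{k+1}), y_{k+1}^* - x_*^*\rangle \geq L^{-1}\|\nabla_* f(y_{k+1})\|^2$ combined with a spectral conversion between Euclidean and $(H_k^*)^{-1}$ norms (the summability of $\{\epsilon_k\}$ gives a uniform upper bound on $\|H_k^*\|$) produces $\|x_{k+1}^* - x_*^*\|^2_{(H_k^*)^{-1}} \leq \|y_{k+1}^* - x_*^*\|^2_{(H_k^*)^{-1}} - b_k^{(2)}$ with $b_k^{(2)} \geq 0$.

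Chaining the two inequalities and then passing the metric from $(H_k^*)^{-1}$ back to $(H_{k-1}^*)^{-1}$ via the hypothesis $(H_k^*)^{-1}\preceq(1+\epsilon_{k-1})(H_{k-1}^*)^{-1}$ gives
\begin{equation*}
a_{k+1} \leq (1+\epsilon_{k-1})\,a_k - \bigl(b_k^{(1)}+b_k^{(2)}\bigr).
\end{equation*}
Since $\sum_k \epsilon_k < \infty$, Lemma \ref{ss} forces $a_k\to a_\infty\geq 0$, and telescoping forces $\sum_k(b_k^{(1)}+b_k^{(2)}) < \infty$. The limit $a_\infty$ is pinned to $0$ by the cluster-point hypothesis: along a subsequence $x_{k_j}\to x^*$, and since the inverse metrics $(H_k^*)^{-1}$ are uniformly bounded above (because $\prod_i(1+\epsilon_i) < \infty$), one has $a_{k_j} \leq C\|x_{k_j}^* - x_*^*\|^2 \to 0$, hence $a_\infty = 0$.

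The delicate step is making $b_k^{(1)}$ manifestly nonnegative. Dong's condition is of Wolfe type rather than Armijo, so extracting a clean sufficient-decrease estimate for $f_*$ in terms of $\alpha_k\|\nabla_* f(x_k)\|^2_{H_k^*}$ requires combining the initial step-length formula with Lemma \ref{Lips}, and then invoking twice-continuous differentiability of $f$ to recast the gradient $\nabla_* f(x_k) = \int_0^1 \nabla^2_* f(x^* + t(x_k-x^*))\,dt\cdot(x_k^*-x_*^*)$ so that the cross and quadratic terms assemble into a non-positive quadratic form in $x_k^*-x_*^*$. Balancing these two contributions under the non-identity variable metric is the crux of the estimate.
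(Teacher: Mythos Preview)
Your overall architecture (quasi-Fej\'er monotonicity of $a_k$, Lemma~\ref{ss}, then the cluster-point subsequence to force $a_\infty=0$) matches the paper's. But the two substeps are handled quite differently, and one of yours contains a real error.

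\smallskip
\textbf{Extrapolation step.} You work far too hard here. The paper never touches Dong's curvature condition or any sufficient-decrease estimate. It simply uses two facts: (i)~by Baillon--Haddad cocoercivity (with $\nabla_* f(x^*)=0$), $\langle x_k^*-x_*^*,\nabla_* f(x_k)\rangle\ge L^{-1}\|\nabla_* f(x_k)\|^2$; and (ii)~because $\alpha_k$ is obtained by backtracking from $\bar\alpha_k=2\|\nabla_* f(x_k)\|^2/(L\|\nabla_* f(x_k)\|^2_{H_k^*})$, one automatically has $\alpha_k\le\bar\alpha_k$. Plugging these into the expansion gives
\[
-\,2\alpha_k\langle x_k^*-x_*^*,\nabla_* f(x_k)\rangle+\alpha_k^2\|\nabla_* f(x_k)\|^2_{H_k^*}\le -\tfrac{2\alpha_k}{L}\|\nabla_* f(x_k)\|^2+\alpha_k^2\|\nabla_* f(x_k)\|^2_{H_k^*}\le 0,
\]
so $\|y_{k+1}^*-x_*^*\|^2_{(H_k^*)^{-1}}\le\|x_k^*-x_*^*\|^2_{(H_k^*)^{-1}}$ in one line. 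The Wolfe/Armijo discussion you flag as ``the crux'' is unnecessary.

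\smallskip
\textbf{PIHT step.} Here your argument breaks. You assert that ``summability of $\{\epsilon_k\}$ gives a uniform upper bound on $\|H_k^*\|$,'' but the hypothesis $(H_{k+1}^*)^{-1}\preceq(1+\epsilon_k)(H_k^*)^{-1}$ only bounds $(H_k^*)^{-1}$ \emph{above}, hence $H_k^*$ \emph{below}; it permits $\|H_k^*\|\to\infty$ (e.g.\ $H_k^*=kI$ with $\epsilon_k=0$). Without an upper bound on $H_k^*$, the cocoercivity estimate---which is Euclidean---cannot be converted into the $(H_k^*)^{-1}$-norm inequality you claim. This is precisely the place where the paper invokes the $C^2$ hypothesis: it writes
\[
x_k^*-x_*^*=\Bigl(I-\tfrac{1}{L+\mu}\nabla_*^2 f(\xi_k)\Bigr)(y_k^*-x_*^*)
\]
via the mean-value theorem, and then argues that this operator does not increase the $(H_k^*)^{-1}$-norm, after which the metric change $(H_k^*)^{-1}\preceq(1+\epsilon_{k-1})(H_{k-1}^*)^{-1}$ is applied. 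So in the paper the twice-differentiability enters in the PIHT step, not (as in your last paragraph) in the extrapolation step.
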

\begin{proof}
For any $k\geq \max\{k_0,k_1\}$,  the iterative process of VMEPIHT turns to
\begin{eqnarray*}
&&x_{k}^*=y_{k}^*-\frac{\nabla_* f(y_{k})}{L+\mu}\\
&&y_{k+1}^*=x_{k}^*-\alpha_k H_k^* \nabla_* f(x_{k})
\end{eqnarray*}
Form the iterative formula above, we have
\begin{eqnarray*}
\|y_{k+1}^*-x_*^*\|^2_{(H_k^*)^{-1}}&=&\|x_{k}^*-\alpha_k H_k^*\nabla_* f(x_{k})-x_*^*\|^2_{(H_k^*)^{-1}}\\
&=&\|x_{k}^*-x_*^*\|^2_{(H_k^*)^{-1}}-2\alpha_k\langle x_{k}^*-x_*^*, \nabla_* f(x_{k})-\nabla_* f(x^*))\rangle+\alpha_k^2\|\nabla_* f(x_{k})\|^2_{H_k^*}\\
&=&\|x_{k}^*-x_*^*\|^2_{(H_k^*)^{-1}}-\frac{2\alpha_k}{L}\|\nabla_* f(x_{k})-\nabla_* f(x^*)\|^2+\alpha_k^2\|\nabla_* f(x_{k})\|^2_{H_k^*}\\
&=&\|x_{k}^*-x_*^*\|^2_{(H_k^*)^{-1}}-\frac{2\alpha_k}{L}\|\nabla_* f(x_{k})\|^2+\alpha_k^2\|\nabla_* f(x_{k})\|^2_{H_k^*}\\
&\leq&\|x_{k}^*-x_*^*\|^2_{(H_k^*)^{-1}}\\
&=&\|y_{k}^*-\nabla_* f(y_{k})/(L+\mu)-x_*^*\|^2_{(H_k^*)^{-1}}\\
&=&\|y_{k}^*-\nabla_* f([y_{k}^*;0])/(L+\mu)-x_*^*+\nabla_* f([x^{*}_*;0])/(L+\mu)\|^2_{(H_K^*)^{-1}}\\
&=&\|(I-\frac{\nabla^2_*f(\xi_k)}{L+\mu})(y_{k}^*-x_*^*)\|^2_{(H_k^*)^{-1}}\\
&\leq& (1+\epsilon_{k-1})\|y_{k}^*-x_*^*\|^2_{(H_{k-1}^*)^{-1}}.
\end{eqnarray*}
Hence $\|y_{k}^*-x_*^*\|^2_{(H_{k-1}^*)^{-1}}$ converges according to Lemma \eqref{ss}. Then $\|y_{k}^*-x_*^*\|^2_{(H_{k-1}^*)^{-1}}\rightarrow0$.
\end{proof}

\begin{remark}
In practice, we can obtain $\{H_k\}_{k\geq k_0}$ in the similar way as subsection 3.1. Firstly construct the definite matrix  $\tilde{H}_k$ by BFGS or limited-memory BFGS method. Secondly, compute $P_{C_k}(\nabla f(x_k))$. So get
  $\left(
                \begin{array}{c}
                  \nabla_k f(x_{k})\\
                 0 \\
               \end{array}
             \right).$
and
\[ y_{k+1}=P_{C_k}(x_k-\alpha_k
\tilde{H}_k \left(
                \begin{array}{c}
                  \nabla_k f(x_{k})\\
                 0 \\
               \end{array}
             \right))=
             \left(
                \begin{array}{c}
                 x^k_k+\alpha_k d_k^k \\
                 0 \\
               \end{array}
             \right).
             \]
Finally, for some $k_1$, let $\tilde{H}_k=\tilde{H}_t$ for any $ k\geq k_1$.  Once $k\geq \max\{k_0,k_1\}$,
\[
 y_{k+1}=P_{C_*}(x_k-\alpha_k
\tilde{H}_t \left(
                \begin{array}{c}
                  \nabla_* f(x_{k})\\
                 0 \\
               \end{array}
             \right))=
             \left(
                \begin{array}{c}
                 x^k_*+\alpha_k \tilde{H}_t^*  \nabla_* f(x_{k})\\
                 0 \\
               \end{array}
             \right).
 \]
Hence $H_k^*=\tilde{H}_{k_1}^* $ for any $k\geq \max\{k_0,k_1\}$. Naturally, the conditions $(H_{k+1}^*)^{-1}\preceq(1+\epsilon_k)(H_k^*)^{-1},\;\;k\geq k_1$ and $\sum_{k=k_1}^\infty\epsilon_k<+\infty$ hold.
\end{remark}

\section{Numerical results}
\label{sec:test}In this section, we present the performance of VMEPIHT method for solving $\ell_0$ minimization problems through some numerical tests on compressive sensing and MRI Imaging. The solvers, nmAP\eqref{nmAPG}, niAPG\eqref{niAPG} and nPIHT\eqref{nPIHT} are compared. We do not compare with the variable metric type method as we do not know the effective strategy of selecting metric matrix. All the experiments are conducted in MATLAB using a desktop computer equipped with a $.0$GHz $8$-core i7 processor and $16$GB memory.

\subsection{Compressive sensing}
For this experiment, we  consider the following $ \ell_0$ regularization compressive sensing  problem coming from \cite{Yuling2017Iterative}
\begin{equation}\label{exsig2}
\min_{x\in\bR^n}\frac{1}{2}\|Ax-b\|^2+\lambda\|x\|_0.
\end{equation}
The data matrix $A\in \bR^{m\times n}$ ($n\in\{10000, 18000\},m=\lfloor\frac{n}{4}\rfloor$) is a  random Gaussian or random Bernoulli sensing matrix and the columns are normalized to have $\ell_2$ norm of $1$. The true signals $x^*$ are chosen $\lfloor\frac{m}{32}\rfloor$-sparse. The observed data $b\in \bR^m$ is generated by
\[
b=Ax^*+\eta,
\]
where $\eta$ is a white Gaussian noise of variance $0.02$. Since the optimal regularization parameter $\lambda^*$ depends on noise level $\|\eta\|$ which is unknown in practice, we predefine a path
\[\{\lambda=\|A^Tb\|^2_\infty\exp^{t}, t=\log1:\dfrac{\log(10^{-10})-\log1}{199}:\log(10^{-10})\}\]
and select the optimal $\lambda^*$.

  For all methods, the initial guess is $x_0=A^Tb$. $\tilde{x}$ is a local minimizer when $\tilde{x}\in\mathcal{H}_{\sqrt{\frac{2{\lambda}}{L+\mu}}} (\tilde{x}-\frac{1}{L+\mu}\nabla f(\tilde{x})).$ So we use the following stopping criteria
\[
\frac{\|x_{k+1}-a_k\|}{\max\{1,\|x_k\|\}}<1\times10^{-5}.
\]
where $a_k$ represents $x_k$ or $y_k$ for nmAPG method, $v_k$ for niAPG method, $y_{k+1}$ for nPIHT method and our VMEPIHT  method.
 The parameters of methods are showed in Table \eqref{PM} where metric matrix is updated by limited-memory BFGS by vector pairs with $T$ members for our method.
\begin{table}
\centering
\begin{tabular}{|c|c|c|c|c|}
\hline
Method&nmAP\eqref{nmAPG}&niAPG\eqref{niAPG}&nPIHT\eqref{nPIHT}&VMEPINT\\
\hline
\multirow{2}*{parameters}&$\alpha_x=\alpha_y=1/(L+10^{-6})$&$\eta=1/(L+10^{-6})$&$\mu=10^{-6}$&$\mu=10^{-6}$\\
&$\eta=0.1$& $q=2$&$\omega_k=0.9999$&$T=6$\\
\hline
\end{tabular}
\caption{The parameters of methods for CS.   }\label{PM}
\end{table}

For each algorithm and each choice of $n$, we conduct 50 experiments and record the CPU runtime, the relative error $\dfrac{\|x-x^*\|}{\|x^*\|}$ and the iteration number. Related results are shown in Figure \eqref{CS1} and Figure \eqref{CS2}.
In addition, for all the methods, $I(\tilde{x})$ is always equal to $I(x^*)$ and we do not list them.
For all methods, the relative error is almost the same which means that the quality of approximate solution is comparability. Our method, VMEPIHT method have obvious runs significantly faster compared with other three method.

\begin{figure}[!htp]
\centering
\begin{tabular}{c@{\hspace{0pt}}c@{\hspace{0pt}}c@{\hspace{0pt}}c}
\includegraphics[width=0.34\textwidth]{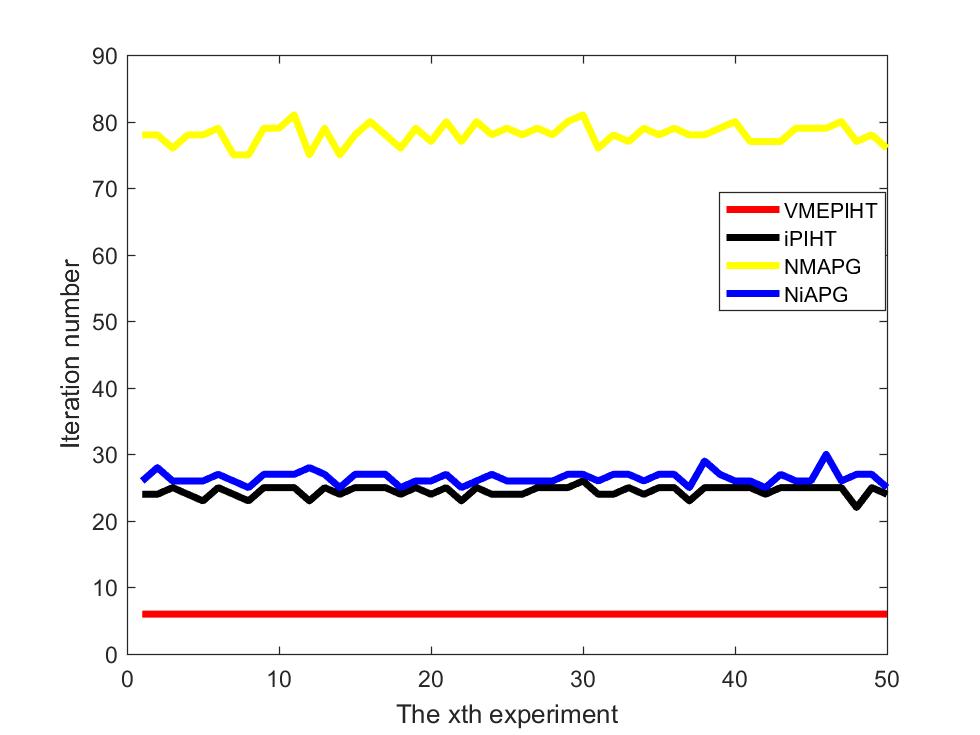} &\includegraphics[width=0.34\textwidth]{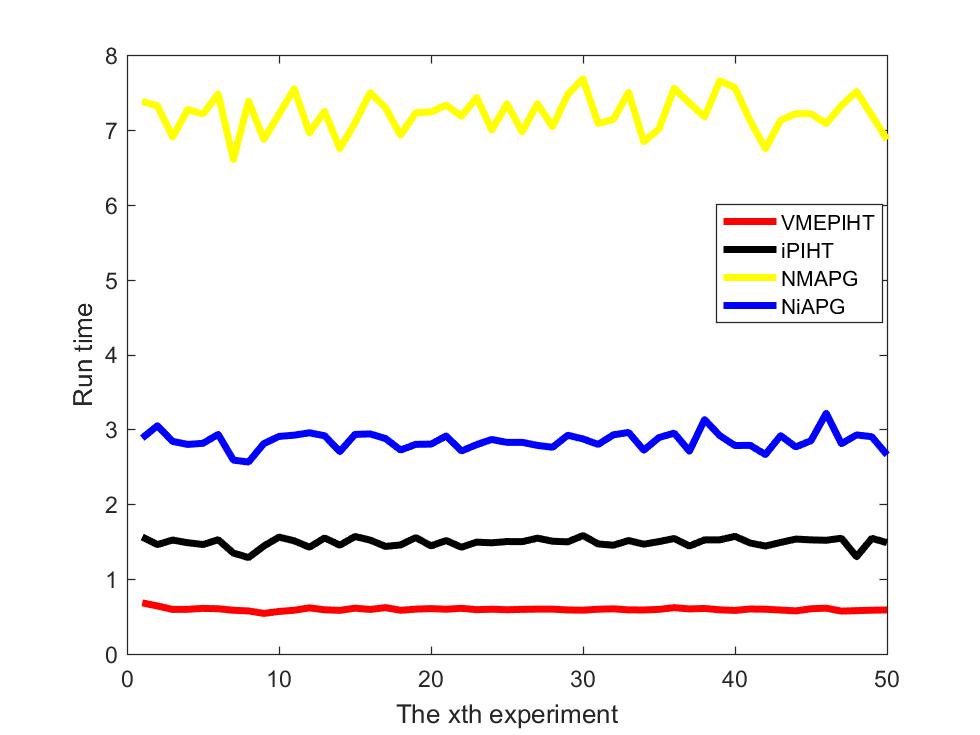}
&\includegraphics[width=0.34\textwidth]{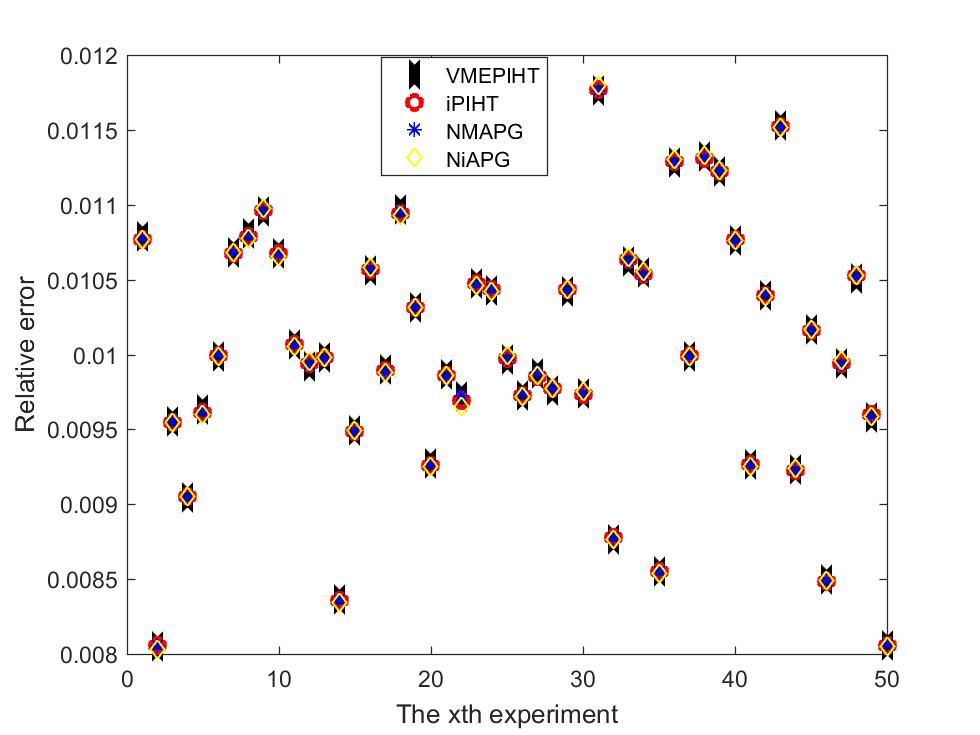}\\
\includegraphics[width=0.34\textwidth]{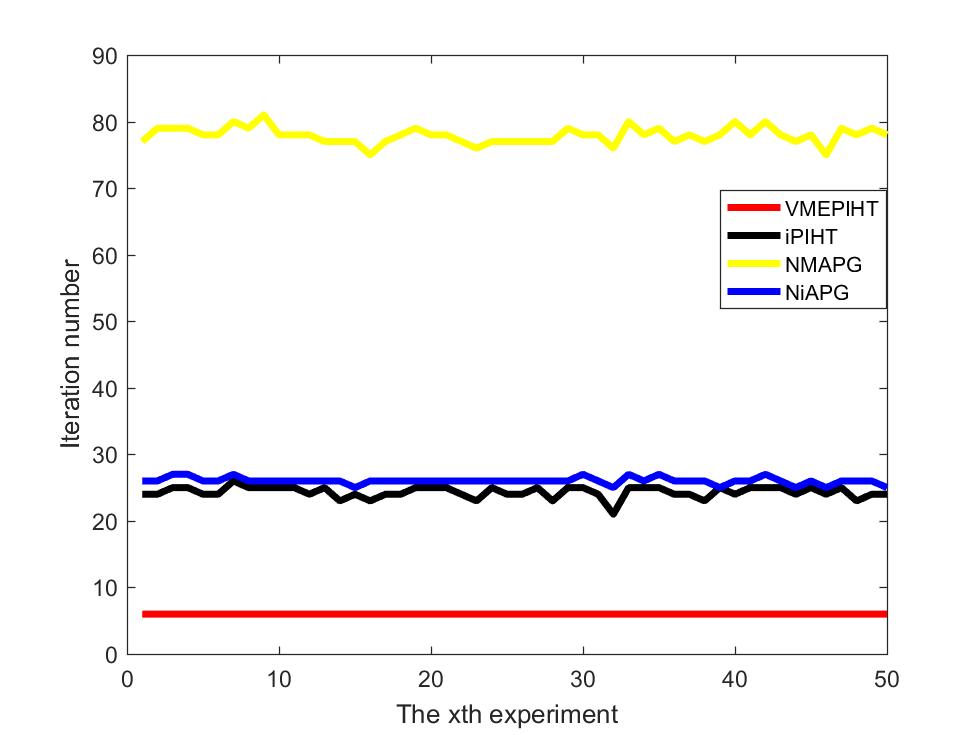}
&
\includegraphics[width=0.34\textwidth]{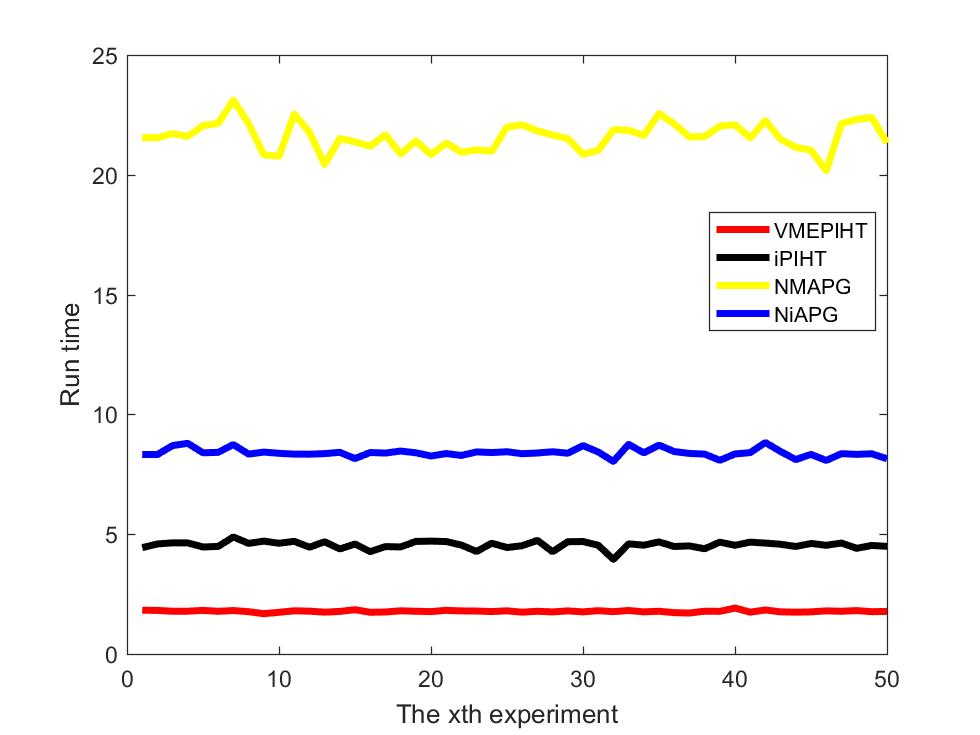}
 &
\includegraphics[width=0.34\textwidth]{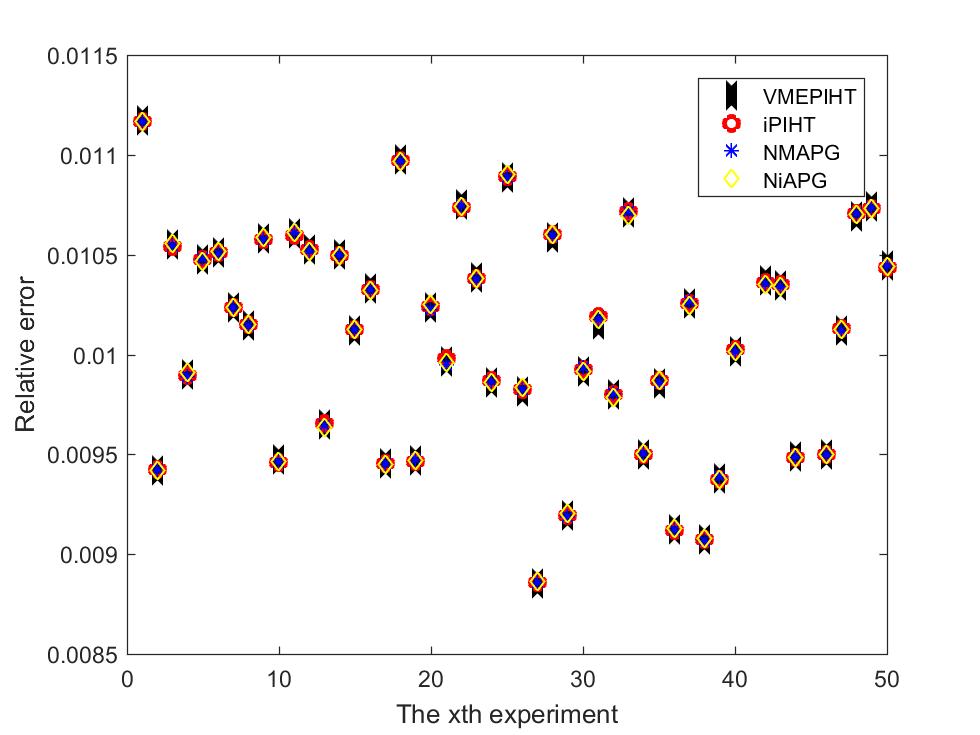} \\
\end{tabular}
\caption{Numerical results(iterative number, CPU time and relative error) with random Gaussian sensing matrix $A$ of size $n=10000$ (left) and $n=18000$ (right). }
\label{CS1}
\end{figure}

\begin{figure}[!htp]
\centering
\begin{tabular}{c@{\hspace{0pt}}c@{\hspace{0pt}}c@{\hspace{0pt}}c}
\includegraphics[width=0.34\textwidth]{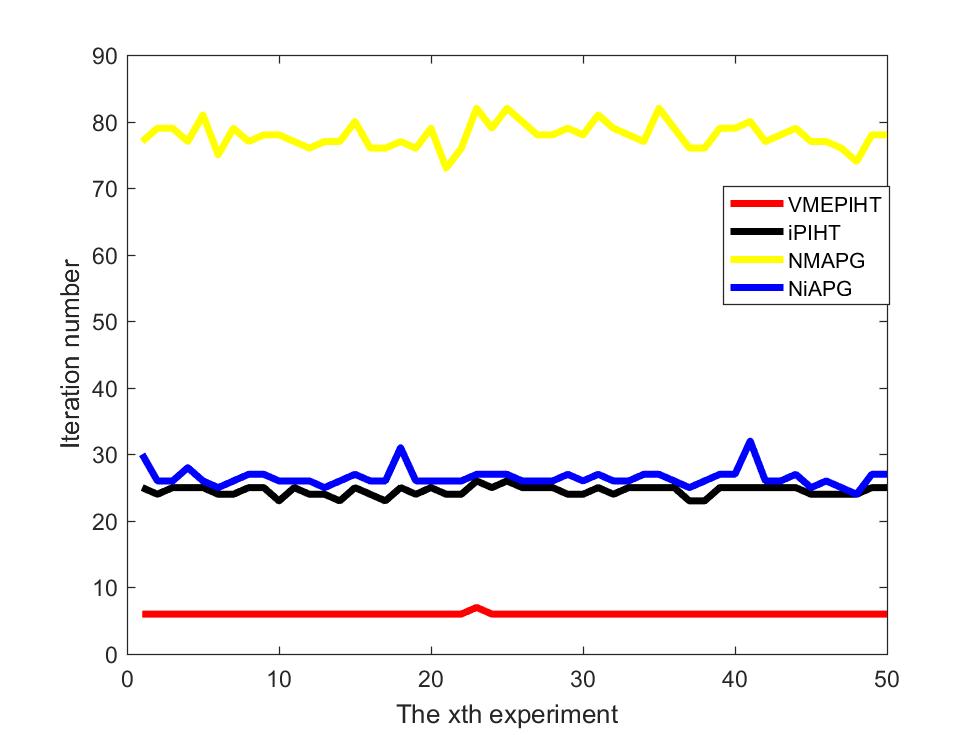} &\includegraphics[width=0.34\textwidth]{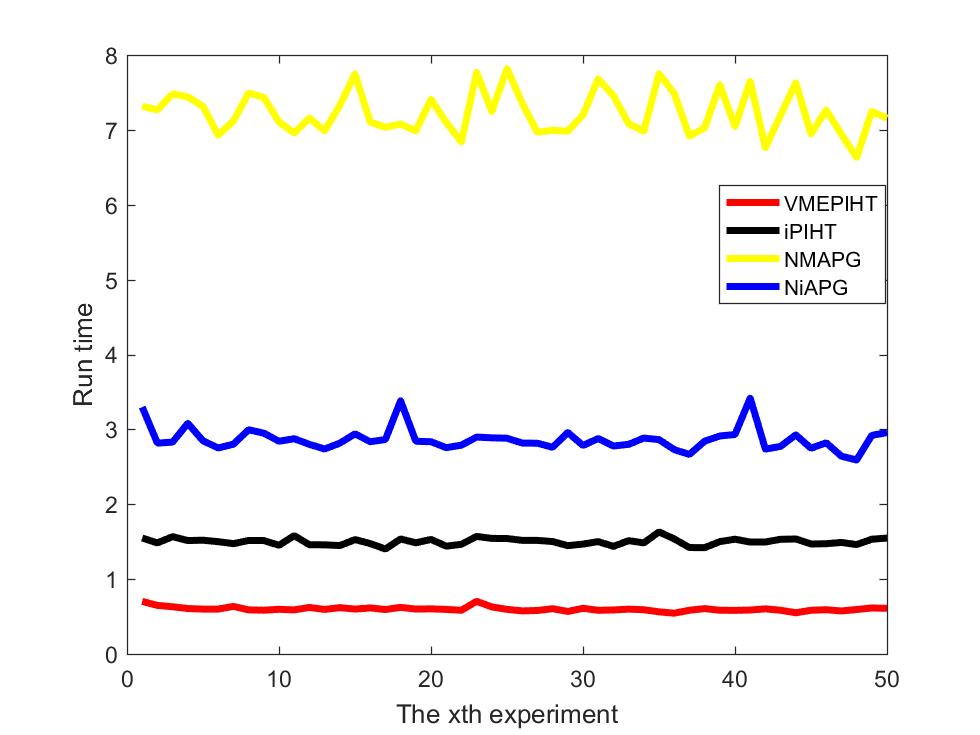}
&\includegraphics[width=0.34\textwidth]{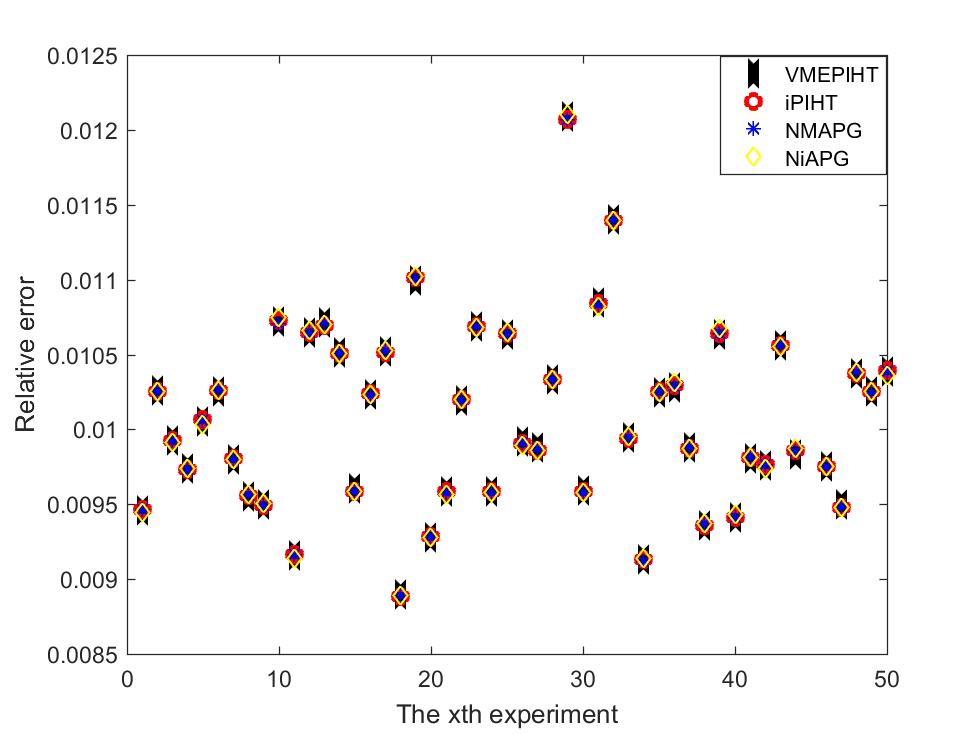}\\
\includegraphics[width=0.34\textwidth]{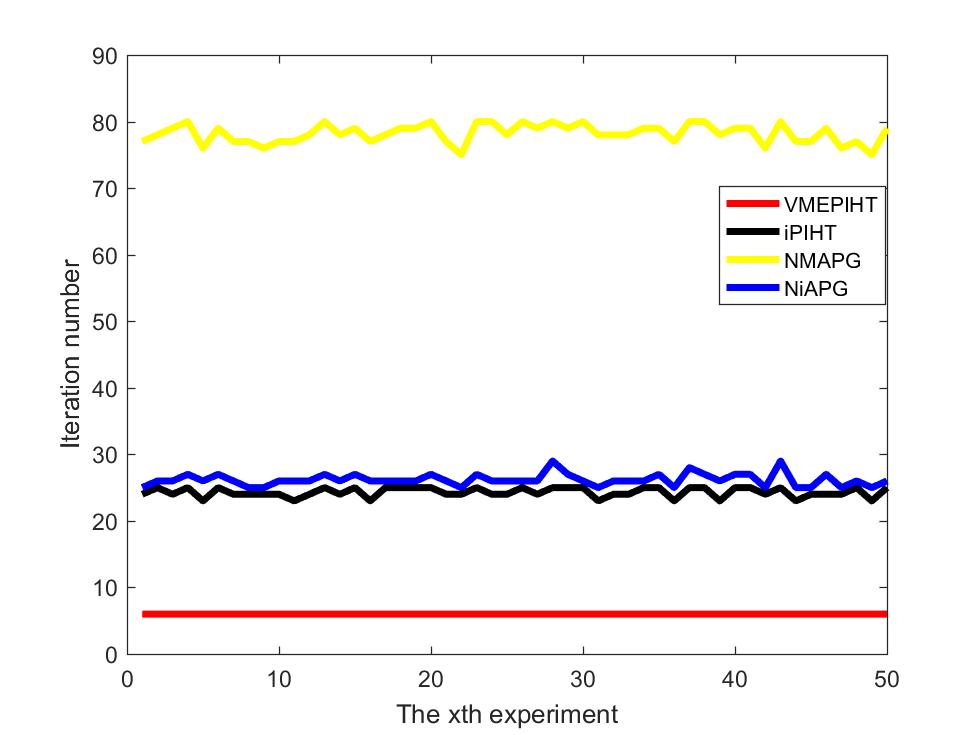}
&
\includegraphics[width=0.34\textwidth]{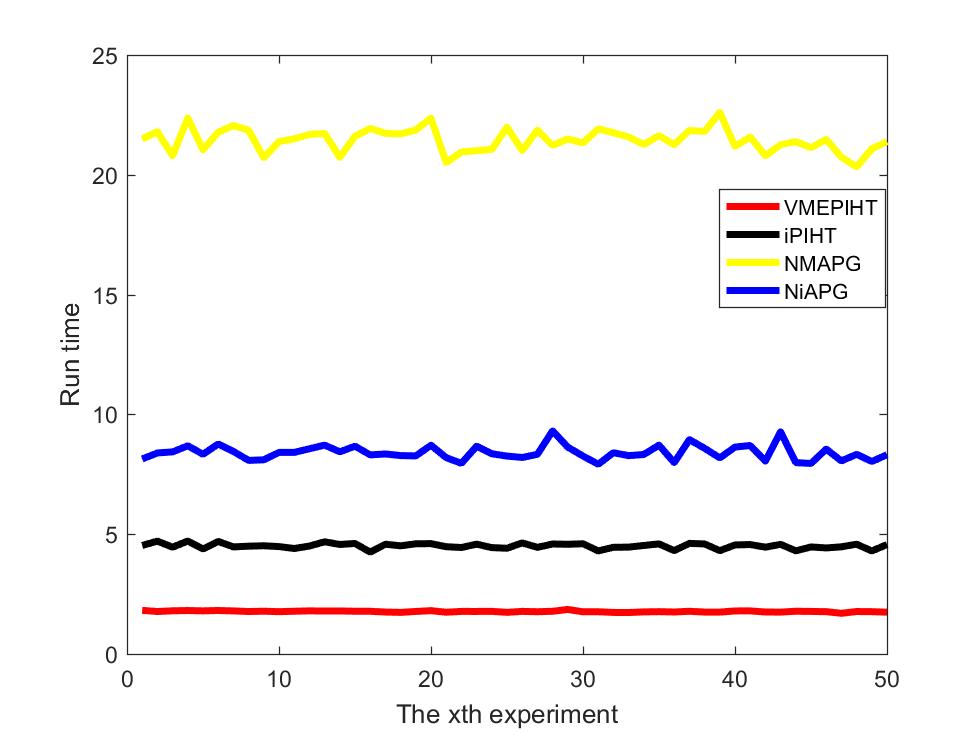}
 &
\includegraphics[width=0.34\textwidth]{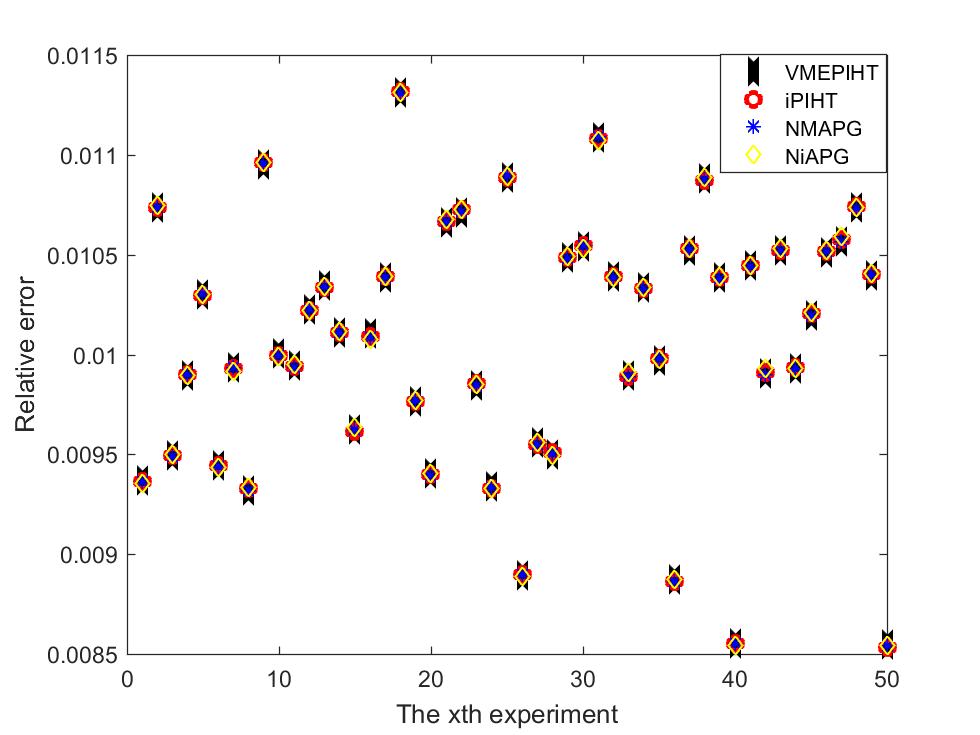} \\
\end{tabular}
\caption{Numerical results(iterative number, CPU time and relative error) with random Bernoulli sensing matrix $A$ of size $n=10000$ (left) and $n=18000$ (right). }
\label{CS2}
\end{figure}

\subsection{CT image reconstruction}
In this subsection, we  use the following $ \ell_0$ regularization  problem to reconstructed CT images,
\begin{equation}
\min_{x} \frac{1}{2}\|AW^T x-b\|^2+\frac{\kappa}{2}\|(I-WW^T)x\|^2+\lambda\|x\|_0,
\label{eq:bal}
\end{equation}
For this experiment, the test image is Shepp-Logan phantom with size $128\times 128$ generated by MATLAB built-in functions, degradation matrix $A$ happens to be a projection matrix with $50$ projections, the wavelet tight frame transform $W$ is the $2D$ tensor-product piecewise linear spline wavelet tight frame system whose level of wavelet decomposition is set to be $4$.  After the construction of the projection matrix $A$, we then add some Gaussian noise with variance $\sigma=0.01\cdot\|Af\|_\infty$ to the projected data to obtain the observed data $b$ where $f$ represents the true image.

For all methods, the stopping criteria is
\[
\frac{\|x_{k+1}-a_k\|}{\max\{1,\|x_k\|\}}<\mbox{tol}.
\]
where $a_k$ represents $x_k$ or $y_k$ for nmAPG method, $v_k$ for niAPG method, $y_{k+1}$ for nPIHT method and our VMEPIHT  method.
 The parameters of methods are showed in Table \eqref{CTP}.
\begin{table}
\centering
\begin{tabular}{|c|c|c|c|c|}
\hline
Method&nmAP\eqref{nmAPG}&niAPG\eqref{niAPG}&nPIHT\eqref{nPIHT}&VMEPINT\\
\hline
\multirow{2}*{parameters}&$\alpha_x=\alpha_y=1/(L+10^{-6})$&$\eta=1/(L+10^{-6})$&$\mu=10^{-6}$&$\mu=10^{-6}$\\
&$\eta=0.4$& $q=9$&$\omega_k=0.9999$&$T=2$\\
\hline
\end{tabular}
\caption{The parameters of methods for CT.   }\label{CTP}
\end{table}

For different choice of tol, we conduct experiments and record the CPU runtime, the PSNR values and the iteration number. Related results are fully summarized in Table \eqref{CTT}.
VMEPIHT method always takes less processing iterative number and have high PSNR. However, it's CPU time don't have absolute advantages. The longer processing time of VMEPIHT when $\mbox{tol}=5\times10^{-5}$ is largely due to the computation of $H_k\nabla f(x_k)$. On the whole, VMEPIHT method have an advantage over other three method.

\begin{table}
\centering
\begin{tabular}{|p{1.5cm}|c|c|c|c|}
\hline
\diagbox[width=1.9cm]{\footnotesize{tol}}{\footnotesize{Method}}&nmAP\eqref{nmAPG}&niAPG\eqref{niAPG}&nPIHT\eqref{nPIHT}&VMEPIHT\\
\hline
$5\times10^{-5}$ &$145.73/231/31.5533$ &$201.80/247/31.4024$  &$\textbf{98.39}/262/31.2354$ &107.34/\textbf{116}/\textbf{32.0719}\\
 \hline
 $1\times10^{-5}$ &$246.39/407/31.5682$  &$229.95/288/31.3905$  &$118.27/313/31.3362$  & $\textbf{115.16}/\textbf{129}/\textbf{32.1153}$\\
\hline
$5\times10^{-6}$  &$278.67/464/31.5906$ &$269.50/339/31.3605$ &$150.55/404/31.3944$ &$\textbf{143.03}/\textbf{152}/\textbf{32.1409}$\\
\hline
$1\times10^{-6}$  &$549.05/836/31.5799$  &$305.22/386/31.3585$ &$213.67/580/31.4364$   &$\textbf{178.39}/\textbf{198}/\textbf{32.1817}$\\
\hline
\end{tabular}
\caption{Numerical results(CPU time/iterative number/PSNR) for CT.  }\label{CTT}
\end{table}
\section{Conclusions and perspectives}
In this paper, we proposed  a variable metric extrapolation proximal iterative hard thresholding (VMEPIHT) method which combines the PIHT method and the skill in quasi-newton method for solving $\ell_0$ regularized minimization problem. We provide some convergence results for the proposed methed, including the convergence of  iterative sequence, linear convergence rate and superlinear convergence rate. Although nmAPG and niAPG
is able to solve more general nonconvex problem compared with VMEPIHT method. VMEPIHT method which designed by using $\|x\|_0$'s characteristics, has better numerical performance on CS and CT reconstruction problems.

\section*{Acknowledgements}

The work were partially supported by NSFC(11901368).

\section*{References}

\bibliography{ref}   

@article{ZhangDong13,
  title={$\ell_{0}$ Minimization for wavelet frame based image restoration},
  author={Y Zhang, B Dong, Z Lu}.
  journal={Mathematics of Computation},
  year={2012},
  volume={82},
 number={282},
  pages={995-1015},
}

@article{2019Wavelet,
  title={Wavelet frame-based image restoration using sparsity, nonlocal, and support prior of frame coefficients},
  author={L He,  Y Wang, Z Xiang },
  journal={The Visual Computer},
  volume={35},
  number={2},
  pages={151-174},
  year={2019},
}

@article{Zengli2018,
  title={An adaptive iteration reconstruction method for limited-angle CT image reconstruction},
  author={C Wang,  L Zeng,  L Zhang, Y Guo, W Yu},
  journal={Journal of Inverse and Ill-posed Problems},
   volume={26},
  number={6},
  year={2018},
}

@article{Machine,
  title={A Multiobjective Sparse Feature Learning Model for Deep Neural Networks},
  author={M Gong, J Liu, H Li, Q  Cai, L Su },
  journal={IEEE Transactions on Neural Networks and Learning Systems},
  volume={26},
  number={12},
  pages={3263-3277},
  year={2017},
}

@article{2013Sparse,
  title={Sparse Coding From a Bayesian Perspective},
  author={X Lu, Y  Wang, Y Yuan},
  journal={IEEE Transactions on Neural Networks \& Learning Systems},
  volume={24},
  number={6},
  pages={929-939},
  year={2013},
}

@article{2020Iterative,
  title={Iterative Potts Minimization for the Recovery of Signals with Discontinuities from Indirect Measurements: The Multivariate Case},
  author={ L Kiefer, M Storath, A  Weinmann},
  journal={Foundations of Computational Mathematics},
   volume={21},
  pages={649-694},
  year={2021},
}

@article{2017A,
  title={A reconstruction algorithm for electrical capacitance tomography via total variation and l0-norm regularizations using experimental data},
  author={J Chen, M Zhang, Y Li},
  year={2017},
}

@ARTICLE{Lu12,
   author = {Z Lu},
  title = {Iterative hard thresholding methods for $\ell_0$ regularized convex
	cone programming},
  journal = {Mathematical Programming},
   volume={147},
  number={1-2},
  pages={125-154},
  year={2013},
}

@ARTICLE{FB79,
   author = {Lions P.L., Mercier B.},
  title = {Splitting algorithms for the sum of two nonlinear operators},
  journal = {SIAM journal on numerical analysis},
   volume={16},
   number={6},
  pages={964-979},
  year={1979},
}

@article{1979Ergodic,
  title={Ergodic convergence to a zero of the sum of monotone operators in Hilbert space},
  author={G Passty},
  journal={Journal of Mathematical Analysis and Applications},
  volume={72},
  number={2},
  pages={383-390},
  year={1979},
}

@article{2013Convergence,
  title={Convergence of descent methods for semi-algebraic and tame problems: proximal algorithms, forward-backward splitting, and regularized Gauss-Seidel methods},
  author={H Attouch, J Bolte , BF Svaiter},
  journal={Mathematical Programming},
  volume={137},
  number={1-2},
  pages={91-129},
  year={2013},
}

@article{bot2014inertial,
 title={An inertial forward¨Cbackward algorithm for the minimization of the sum of two nonconvex functions},
  author={RI Bot, ER Csetnek, SL L\'{a}szl\'{o}},
  journal={EURO Journal on Computational Optimization},
  volume={4},
  number={1},
  pages={3-25},
  year={2016},
}

@article{chubulai,
title={Accelerated Proximal Gradient Methods for Nonconvex Programming},
author={H Li, Z Lin},
journal={NIPS: Proceedings of the 28th International Conference on Neural Information Processing Systems},
volume={1},
pages={379-387},
year={2015},
}

@article{2018Inexact,
  title={Efficient Inexact Proximal Gradient Algorithm for Nonconvex Problems},
  author={Q Yao, JT  Kwok, F Gao, W Chen, TY Liu},
  journal={Proceedings of the Twenty-Sixth International Joint Conference on Artificial Intelligence}
  year={2017},
}

@article{2019A,
  title={A New Proximal Iterative Hard Thresholding Method with Extrapolation for $\ell_0$ Minimization},
  author={X Zhang, X Zhang},
  journal={Journal of Scientific Computing},
  volume={79},
  number={2},
  pages={809-826},
  year={2019},
}

@article{2016A,
  title={A Multi-step Inertial Forward¨CBackward Splitting Method for Non-convex Optimization},
  author={ J Liang, J Fadili, G Peyr¨¦},
  year={2016},
}

@article{BONETTINI2016A,
  title={A Variable Metric Forward-Backward Method with Extrapolation},
  author={S Bonettini, F Porta, V Ruggiero},
  journal={SIAM Journal on Scientificuting},
volume={38},
  number={4},
  pages={A2558¨CA2584},
  year={2016},
}

@article{2015Splitting,
  title={Splitting Methods with Variable Metric for Kurdyka¨Cojasiewicz Functions and General Convergence Rates},
  author={P Frankel, G Garrigos, J  Peypouquet},
  journal={Journal of Optimization Theory and Applications},
  volume={165},
  number={3},
  pages={874-900},
  year={2015},
}

@article{2016On,
  title={On the convergence of variable metric line-search based proximal-gradient method under the Kurdyka-Lojasiewicz inequality},
  author={S Bonettini, I Loris, F Porta, M Prato, S Rebegoldi},
  journal={Inverse Problems},
  volume={33},
  number={5},
  year={2016},
}

@article{2016The,
  title={The variable metric forward-backward splitting algorithm under mild differentiability assumptions},
  author={S Salzo},
  journal={Siam Journal on Optimization},
  volume={27},
  number={4},
  pages={2153-2181},
  year={2017},
}

@article{2013A,
  title={A block coordinate variable metric forward-backward algorithm},
  author={E Chouzenoux, JC Pesquet, A Repetti },
  journal={Journal of Global Optimization},
  volume={66},
  number={3},
  pages={457-485},
  year={2016},
}

@article{2020Variable,
  title={Variable metric techniques for forward¨Cbackward methods in imaging},
  author={S Bonettini,  F Porta, V Ruggiero, L Zanni },
  journal={Journal of Computational and Applied Mathematics},
  volume={385},
  pages={113192},
  year={2021},
}

@article{2016Unifying,
  title={Unifying abstract inexact convergence theorems and block coordinate variable metric iPiano},
 journal={SIAM Journal on Optimization},
  author={P Ochs},
    volume={29},
  number={1},
  pages={541-570},
  year={2019},
}

@ARTICLE{BT09,
  author = {A Beck, M Teboulle},
  title = {A fast iterative shrinkage-thresholding algorithm for linear inverse
	problems},
  journal = {SIAM Journal on Imaging Sciences},
    volume = {2},
  number = {1},
   pages = {183--202},
  year = {2009},
}

@book{1987Introduction,
  title={Introduction to optimization},
  author={ B Polyak},
  publisher={Chapman and Hall},
  year={1987},
}

@article{2009A,
  title={A scaled gradient projection method for constrained image deblurring},
  author={S  Bonettini, R Zanella, L Zanni},
  journal={Inverse Problems},
  volume={25},
  pages={015002},
  year={2009},
}

@article{2014New,
  title={New convergence results for the scaled gradient projection method},
  author={S Bonettini, M Prato},
  journal={Inverse Problems},
  volume={31},
  number={9},
  pages={095008},
  year={2015},
}

@article{2015A,
  title={A New Steplength Selection for Scaled Gradient Methods with Application to Image Deblurring},
  author={F Porta, M Prato, L  Zanni},
  journal={Journal of Scientific Computing},
  volume={65},
  number={3},
  pages={895-919},
  year={2015},
}

@article{2015Zhang,
  title={A Note on the Complexity of Proximal Iterative Hard Thresholding Algorithm},
  author={ X Zhang, X Zhang},
  journal={Journal of the Operations Research Society of China},
  volume={3},
  number={4},
  pages={459-473},
  year={2015},
}

@article{2010dong,
  title={New step lengths in conjugate gradient methods},
  author={Y  Dong},
  journal={Computers and Mathematics with Applications},
  volume={60},
  number={ 3},
  pages={563-571},
  year={2010},
}

@book{1999Numerical,
  title={Numerical Optimization Second Edition},
  author={J Nocedal,SJ Wright},
  publisher={World Scientific},
  year={1999},
}

@article{Yuling2017Iterative,
  title={Iterative Soft/Hard Thresholding With Homotopy Continuation for Sparse Recovery},
  author={Y Jiao, B Jin, X Lu},
  journal={IEEE Signal Processing Letters},
    volume={24},
  number={6},
  pages={784-788},
  year={2017},
}

\end{document}